\theoremstyle{plain}
\newtheorem{theorem}{Theorem}
\newtheorem{lemma}{Lemma}
\newtheorem{corollary}{Corollary}
\newtheorem{proposition}{Proposition}
\theoremstyle{definition}
\newtheorem{remark}{Remark}
\newtheorem{example}{Example}
\def\esett{\mathrm e^\mathrm{sett}}
\def\csett{\mathrm{cost}^\mathrm{sett}}
\def\nsett{\mathrm{comp}^\mathrm{sett}}
\def\ew{\mathrm e^\mathrm{ww}}
\def\rw{\mathrm{rad}^\mathrm{ww}}
\def\cw{\mathrm{cost}^\mathrm{ww}}
\def\nw{\mathrm{comp}^\mathrm{ww}}
\def\nsw{\mathrm{n}^\mathrm{w}}
\def\ea{\mathrm e^\mathrm{wa}}
\def\ca{\mathrm{cost}^\mathrm{wa}}
\def\na{\mathrm{comp}^\mathrm{wa}}
\def\by{\mathbf y}
\title
[Tractability in the presence of noise]
{Worst case tractability of linear problems\\in the presence of noise: linear information}
\author[L. Plaskota]{Leszek Plaskota}
\address{Faculty of Mathematics, Informatics and Mechanics, University of Warsaw, 
ul. S.~Banacha 2, 02-097 Warsaw, Poland}
\email{L.Plaskota@mimuw.edu.pl, https://orcid.org/0000-0001-8704-0790}
\author[P. Siedlecki]{Pawe{\l} Siedlecki}
\address{Faculty of Mathematics, Informatics and Mechanics, University of Warsaw, 
ul. S.~Banacha 2, 02-097 Warsaw, Poland}
\email{P.Siedlecki@mimuw.edu.pl, https://orcid.org/0000-0001-7352-9253}
\date{\today}
\begin{document}

\begin{abstract}
We study the worst case tractability of multivariate linear problems defined on separable Hilbert spaces. 
Information about a problem instance consists of noisy evaluations of arbitrary bounded linear functionals, 
where the noise is either deterministic or random. The cost of a single evaluation depends on its precision 
and is controlled by a cost function. We establish mutual interactions between tractability of a problem with 
noisy information, the cost function, and tractability of the same problem, but with exact information.
\end{abstract}

\maketitle

\section{Introduction}

\emph{Tractability of multivariate problems} is nowadays one of the most active areas 
of \emph{information-based complexity}; 
we mention only the three-volume monograph \cite{NoWo08,NoWo10,NoWo12}. 
Tractability research concentrates on establishing both quantitative and qualitative properties of 
the interplay between the cost and accuracy of approximation, and the number of variables 
occurring in a multivariate computational problem. To the best of the authors' knowledge, all 
tractability research has hitherto concentrated on \emph{exact information}, i.e., information consisting 
of exact evaluations of information functionals. The goal of this article is to extend tractability 
studies to include \emph{noisy information}, where observations of functionals 
are contaminated by some noise. 

We study tractability in the \emph{worst case setting}, in the presence of 
\emph{deterministic} (bounded) or \emph{random} (Gaussian) noise. The model of noise and cost 
is adopted from \cite{MoPl20,NICC96}. That is, information is built out of a finite number of 
noisy evaluations of functionals, which 
are subject to our choice. Moreover, prior to their noisy evaluation it is 
also possible to set required \emph{precision} $\sigma,$ which is a bound on the absolute 
value of the noise in the deterministic case, and the standard deviation of a Gaussian variable 
in the case of random noise. The cost of a single evaluation with a given precision is controlled by 
a \emph{cost function} $\$$, which is a part of the problem formulation. 
The higher the precision, the higher the cost.

The main theme of our work is a comparative study of exact and noisy information from the point 
of view of tractability of multivariate linear problems $S_d:F_d\to G_d$ acting between separable Hilbert spaces.
We assume that noisy evaluations of \emph{any} linear functionals with norm bounded by one are possible.
The focus is on \emph{(strong) polynomial tractability}, \emph{weak tractability}, \emph{intractability}, and 
\emph{the curse of dimensionality}. We are interested in establishing mutual interactions between tractability 
of a multivariate problem with noisy information, the cost function, and tractability of the same problem, 
but with exact information. In particular, we seek for conditions guaranteeing equivalence of 
various tractability notions for both, the exact and noisy settings. 

Such equivalence is established, for instance, for polynomial tractability provided the cost function 
grows polynomially.  
To give a flavor of our results, suppose that the problem with exact information is polynomially 
tractable, i.e., its $(\varepsilon,d)$-complexity is upper bounded by $Cd^q\varepsilon^{-p},$ 
where $\varepsilon$ is the required error of approximation, 
and that the cost function grows polynomially, i.e., $\$(\sigma,d)\le 1+Dd^t\sigma^{-2s}.$ 
Then the same problem with noisy information is also polynomially tractable. 
Moreover, its complexity is essentially bounded as
$$\mathrm{comp}_\$(\varepsilon,d)\preccurlyeq 
d^{\overline t+q(\overline s+1)}
\varepsilon^{-\max\left(p(\overline s+1),2\overline s\right)},$$
where $(\overline s,\overline t)=(s,t)$ for bounded noise, and 
$(\overline s,\overline t)=(s,t)/\max(1,s)$ for Gaussian noise, 
see Theorem~\ref{thm:wwpoly} and Theorem~\ref{thm:wapoly}. 
We stress that we do not know whether 
the exponents of polynomial tractability above are optimal. The point is that, unlike in the case of 
exact information, it is generally an open question how to optimally select functionals when their 
evaluations are corrupted by noise.

As for the technical part, it turns out that an important role in the analysis plays the complexity of 
a one-dimensional problem that relies on approximating an unknown real parameter from its noisy 
observations. This problem is trivial in the case of bounded noise, but far from that in the case 
of Gaussian noise, cf. \cite{Don94,MoPl20}. Some difficulty in showing lower bounds adds the fact 
that in the case of random noise one has to consider deterministic as well as randomized approximations. 
Indeed, although randomization is formally not allowed in the problem formulation, it can be mimicked 
with the help of adaption, cf. \cite{Pla96a,Pla96b}.

The paper is organized as follows. 
The scene is formally set in Section~\ref{sec:prelim}. The results for bounded noise 
are in Section~\ref{sec:determ}, and those for Gaussian noise in Section~\ref{sec:random}. 
The Appendix contains some additional material concerning the optimal choice of information 
functionals in the case of bounded and Gaussian noise.

\section{Preliminaries}\label{sec:prelim}

We consider a \emph{multivariate problem} $\mathcal S=\{S_d\}_{d\ge 1}$ where
$$  S_d: F_d\to G_d, $$
$F_d$ and $G_d$ are separable Hilbert spaces, both over the reals, and 
$S_d$ are nonzero continuous linear operators with norms 
$$  \|S_d\|=\sup_{\|f\|_{F_d}\le 1}\|S_d(f)\|_{G_d}. $$

\subsection{Information and approximation}
The values $S_d(f)$ for $f\in F_d$ are approximated based on information 
$\mathbf y=(y_1,y_2,\ldots,y_n)\in\mathbb R^n$ about $f,$ which consists of finitely many 
noisy values of some functionals at $f.$ That is, 
$$ y_i=L_i(f)+e_i,\quad 1\le i\le n, $$ 
where $L_i$ are in a class $\Lambda_d\subset F_d^*$ of permissible functionals,  and 
$e_i$ is noise. A crucial assumption of the current paper is that  arbitrary continuous functionals 
with norm at most one are allowed, 
$$\Lambda_d=\{L\in F_d^*:\,\|L\|\le 1\},$$ where $\|L\|=\sup_{\|f\|_{F_d}\le 1}|L(f)|.$
The noise can be deterministic (bounded) or random (Gaussian), 
$$  |e_i|\le\sigma_i\quad\mbox{or}\quad e_i\stackrel{iid}\sim\mathcal N(0,\sigma_i^2), $$
where $\sigma_i$ represents precision of the $i$th evaluation, and $\mathcal N(0,\sigma)$ 
is the standard zero-mean Gaussian distribution with variance $\sigma^2.$ 
Then an approximation to $S_d(f)$ is given as $\Phi(\mathbf y),$ where 
$$\Phi:Y\to G_d,$$
called an \emph{algorithm}, is an arbitrary mapping 
acting on the set $Y$ of all possible values of information.

\smallskip
We now describe the information more formally. We first deal with \emph{nonadaptive} (or parallel)
information, in which case the functionals $L_i$ and precisions $\sigma_i$ are the same for all problem 
instances $f\in F_d.$ In the case of bounded noise, nonadaptive information is a multi-valued 
operator, i.e., $N:F_d\to 2^Y,$ where $2^Y$ is the power set of $Y=\mathbb R^n,$ and
$$N(f)=\big\{\big(L_1(f)+e_1,L_2(f)+e_2,\ldots,L_n(f)+e_n\big):\;
     |e_i|\le\sigma_i,\,1\le i\le n\big\}.$$
Then $\mathbf y$ is information about $f$ iff $\mathbf y\in N(f).$ 

In case of Gaussian noise, nonadaptive information $\mathbf y$ about $f$ is a realization of 
the random variable with $n$ dimensional Gaussian distribution $\pi_f$ whose mean element is
$m_f=(L_1(f),\ldots,L_n(f))$ and correlation matrix 
$\Sigma=\mathrm{diag}(\sigma_1^2,\ldots,\sigma_n^2).$ 
Therefore nonadaptive information is now a mapping $N:F_d\to\mathcal P(Y),$ where 
$\mathcal P(Y)$ is a set of probability distributions on the Borel sets of $Y=\mathbb R^n,$ and 
$$N(f)=\pi_f\quad\mbox{for}\quad f\in F_d.$$

Although we will mainly exploit nonadaptive information in this paper, in a generic approximation 
scheme we also allow a more general \emph{adaptive} (or sequential) information, where 
the choice of the successive functionals $L_i$ and precisions $\sigma_i,$ as well as the number 
of them, depend on $f$ and noise via the previously obtained values $y_1,\ldots,y_{i-1}.$ 
The process of obtaining adaptive information $\mathbf y=(y_1,\ldots,y_n)$ about $f$ can be 
schematically described as follows:
\begin{eqnarray}\label{adaptinfo}\left\{\;\begin{array}{llll}
   y_1 &=& L_1(f)+e_1, & \quad\sigma_1, \\
    y_2 &=& L_2(f;y_1)+e_2,  & \quad\sigma_2(y_1), \\ 
    y_3 &=& L_3(f;y_1,y_2)+e_3, & \quad\sigma_3(y_1,y_2), \\
    &\cdots &  & \\
   y_n &=& L_n(f;y_1,y_2,\ldots,y_{n-1})+e_n, & \quad\sigma_n(y_1,y_2,\ldots,y_{n-1}),
    \end{array}\right.
\end{eqnarray}
where $L_i(\,\cdot\,;y_1,\ldots,y_{i-1})\in\Lambda_d.$
The process terminates when $(y_1,y_2,\ldots,y_n)\in Y,$ where the set $Y$ of all values 
of information consists of finite sequences of (possibly) various lengths. For the termination criterion 
to be well defined we assume that for any infinite sequence $(y_1,y_2,y_3\ldots)$ there is exactly 
one $n$ such that $(y_1,\ldots,y_n)\in Y.$ The corresponding operator $N$ is for both, bounded 
and Gaussian noise, determined  by the above construction. (In case of Gaussian noise appropriate 
measurability assumptions on $L(f;\,\cdot)$ and $\sigma_i(\cdot)$ have to be met.) 
For details, see \cite[Sect.~2.7 \&~3.7]{NICC96}.

\subsection{Cost function}
We assume that we are free to choose the information functionals and precisions, but we have to 
pay more for more accurate evaluations. That is, the cost of a single noisy evaluation of $L(f)$ 
for $f\in F_d$ with precision $\sigma$ equals $\$(\sigma,d),$ where 
$$ \$:[0,+\infty)\times\{1,2,3,\ldots\}\to[1,+\infty] $$
is a \emph{cost function} that is non-decreasing in both $\sigma^{-1}$ and $d.$ 
Note that $\$\ge 1,$ which corresponds to a natural assumption that one has to pay at least one unit
even for `slightest touch' of a functional. For instance,
$$  \$(\sigma,d)=\left\{\begin{array}{rl} 
      +\infty, & 0\le\sigma<\sigma_0, \\ 1, & \sigma_0\le\sigma, \end{array}\right. 
$$
corresponds to the situation when one can only observe with precision $\sigma_0$ at cost $1$. 
If, in addition, $\sigma_0=0$ then information is exact at the unit cost for all 
$\sigma\ge 0$ and $d\ge 1.$
We distinguish several types of cost functions depending on how they grow as 
$\sigma^{-1}$ and $d$ increase. In particular, we have:
\begin{itemize}
\item polynomial growth in $\sigma^{-1}$ and $d$ iff
$$\$(\sigma,d)\le 1+Dd^t\sigma^{-s}
\quad\mbox{for all}\;\,d\ge 1\;\mbox{and}\;\sigma\in(0,1),$$
where $D,t,s$ are some nonnegative numbers,
\item sub-exponential growth in $\sigma^{-1}+d$ iff
$$\lim_{\sigma^{-1}+d\to\infty}\frac{\ln\$(\sigma,d)}{\sigma^{-1}+d}=0,$$
\item exponential growth in $\sigma^{-1}+d$ iff
$$\limsup_{\sigma^{-1}+d\to\infty}\frac{\ln\$(\sigma,d)}{\sigma^{-1}+d}>0.$$
\end{itemize}
We will also consider corresponding growths in only one of the variables, $\sigma^{-1}$ or $d,$
with the other variable fixed. For instance, we have polynomial growth in $\sigma^{-1}$ iff 
$\$(\sigma,d)\le D\psi(d)\sigma^{-s}$ for all $d\ge 1$ and $\sigma\in(0,1),$ or we have 
sub-exponential growth in $d$ iff $\lim_{d\to\infty}\ln\$(\sigma,d)/d=0$ for all $\sigma\in(0,1).$

\medskip
A total cost $\csett_\$(N)$ of given information $N$ and error $\esett(S_d,N,\Phi)$ of 
an algorithm $\Phi$ using it depend on a setting under consideration, and will be defined 
separately for each setting. The settings ware distinguished by whether we have 
bounded or Gaussian noise.

\subsection{Tractability notions}
For a given setting, let 
$$  \nsett_\$(\varepsilon,d)=
      \inf\big\{\csett_\$(N):\,N,\Phi\;\mbox{such that}\;\esett(S_d,N,\Phi)\le\varepsilon\|S_d\|\,\big\}  $$
be the minimal cost of information sufficient to approximate $S_d$ with (normalized) error $\varepsilon.$
We call $\nsett_\$(\varepsilon,d)$ the \emph{information $(\varepsilon,d)$-complexity}, or simply 
\emph{$(\varepsilon,d)$-complexity} of our problem. We consider the following tractability notions,
cf. \cite{NoWo08}.
\begin{itemize}
\item
A multivariate problem $\mathcal S=\{S_d\}_{d\ge 1}$ is \emph{polynomially tractable} iff
\begin{equation}\label{polytract}
  \nsett_\$(\varepsilon,d)\le C d^q \varepsilon^{-p}
  \quad\mbox{for all}\;\,d\ge 1\;\mbox{and}\; \varepsilon\in(0,1),
\end{equation}
where $C,q,p$ are some nonnegative numbers. 
If, in addition, \eqref{polytract} holds with $q=0$ then the problem is
\emph{strongly polynomially tractable}, and the infimum of $p$ satisfying \eqref{polytract} 
with $q=0$ is the \emph{strong exponent}.
\item
A problem is \emph{weakly tractable} iff
$$  \lim_{\varepsilon^{-1}+d\to+\infty}\frac{\ln\left(\nsett_\$(\varepsilon,d)\right)}
{\varepsilon^{-1}+d}=0. $$
\item
A problem is \emph{intractable} iff it is not weakly tractable. 
\item
A problem suffers from the \emph{curse of dimensionality} iff 
there are $\varepsilon_0>0,$ $C>0,$ and $\gamma>0,$ such that for infinitely many $d$ we have
$$\nsett_\$(\varepsilon_0,d)\ge C(1+\gamma)^d.$$ 
Equivalently, we have the curse iff there is $\varepsilon_0>0$ such that
$$\limsup_{d\to\infty}\frac{\ln\big(\nsett_\$(\varepsilon_0,d)\big)}{d}>0.$$
\end{itemize}

We will later draw conclusions about tractability in the case of noisy information assuming we know
tractability for exact information. As we already noticed, in the latter case we have $\$(\sigma,d)=1,$ 
which means that we just count the number of functional evaluations. In the two settings considered 
in this paper the complexities in the case of exact information are the same and denoted by 
$$\nsw(\varepsilon,d),$$
where `$\mathrm w$' stands for `worst'. 

\section{Worst case setting with bounded noise}\label{sec:determ}

In this section we assume that the noise is bounded.
That is, information about $f$ is given as $\by=(y_1,y_2,\ldots,y_{n(\by)})$ where
$$y_i=L_i(f;y_1,\ldots,y_{i-1})+e_i,\qquad|e_i|\le\sigma_i(y_1,\ldots,y_{i-1}).$$ 
The (total) cost of information $N$ is defined as
$$  \cw_\$(N)=\sup_{\|f\|_{F_d}\le 1}\,\sup_{\by\in N(f)}
      \sum_{i=1}^{n(\by)}\$(\sigma_i(y_1,\ldots,y_{i-1})),  $$
and the error of an algorithm $\Phi$ using information $N$ as
$$  \ew(S_d,N,\Phi)=\sup_{\|f\|_{F_d}\le 1}\,\sup_{\by\in N(f)}\|S_d(f)-\Phi(\by)\|_{G_d}. $$
We assume that $S_d$ is a \emph{compact} operator which, as well known, is necessary if 
we want to assure that $\nw_\$(\varepsilon,d)<+\infty$ for all $\varepsilon>0.$ 

\smallskip
We now recall some auxiliary facts about the current setting that can be found, e.g., in \cite{NICC96}. 

Let $N:F_d\to 2^Y$ be arbitrary information and $\rw(N)$ be its \emph{radius}, i.e., 
the minimal error that can achieved using $N.$ If $N$ is nonadaptive and uses $n$ functionals $L_i$ 
with precisions $\sigma_i$ then
\begin{equation}\label{radworst}
\rw(N)=\max\big\{\|S_d(h)\|_{G_d}:\;\|h\|_{F_d}\le 1,\,|L_i(h)|\le\sigma_i,\,1\le i\le n\big\}.
\end{equation}
Next we notice that we can restrict our considerations to algorithms using nonadaptive information. 
Indeed, for any adaptive information $N^{\mathrm{ada}}$ of the form \eqref{adaptinfo} and with
range $Y^\mathrm{ada}$ one can define nonadaptive information $N^{\mathrm{non}}$ with 
range $Y^\mathrm{non}=\mathbb R^n$ where $n$ is such that 
$(\underbrace{0,\ldots,0}_n)\in Y^\mathrm{ada}$ and 
$$(y_1,\ldots,y_n)\in N^\mathrm{non}(f)\quad\mbox{ iff }\quad
y_i=L_i(f;\underbrace{0,\ldots,0}_{i-1})+e_i,\;|e_i|\le\sigma_i(\underbrace{0,\ldots,0}_{i-1}),
\quad 1\le i\le n.$$
Then $\rw(N^\mathrm{non})\le\rw(N^\mathrm{ada})$ 
and $\cw_\$(N^\mathrm{non})\le\cw_\$(N^\mathrm{ada}),$ which means that adaption does not help.
This and \eqref{radworst} imply that
$$\nw_\$(\varepsilon,d)=
\inf\big\{\cw_\$(N):\,N\mbox{-nonadaptive},\;\rw(N)\le\varepsilon\|S_d\|\big\}.$$

\medskip
To avoid notational difficulties, from now on we assume that $\mathrm{dim}(F_d)=+\infty,$ 
which can obviously be done without loss of generality.
Let $\{f_{d,j}^*\}_{j\ge 1}$ be the complete orthonormal system of eigenelements of 
$S_d^*S_d:F_d\to F_d,$ and  
$$\lambda_{d,1}\ge\lambda_{d,2}\ge\cdots\ge\lambda_{d,j}\ge\cdots$$
the corresponding eigenvalues. We have $\|S_d\|=\sqrt{\lambda_{d,1}}$ and 
$\lim_{j\to\infty}\lambda_{d,j}=0.$ Furthermore, in the noiseless case, information
\begin{equation}\label{optinfo}
N_n^d=\big(\langle\,\cdot\,,f_{d,1}^*\rangle_{F_d},\ldots,\langle\,\cdot\,,f_{d,n}^*\rangle_{F_d}\big)
\end{equation}
is $n$th optimal, and its radius $\mathrm{rad}^\mathrm w(N_n^d)=\sqrt{\lambda_{d,n+1}},$ cf. \cite{NoWo08}.
Hence
\begin{equation}\label{minn}
\nsw(\varepsilon,d)=\min\big\{n:\,\sqrt{\lambda_{d,n+1}}\le\varepsilon\sqrt{\lambda_{d,1}}\,\big\}.
\end{equation}

\smallskip
We first show a general though important result that will be used later. 
\begin{lemma}\label{simplelemma}
For all $\varepsilon\in(0,1)$ and $d\ge 1$ we have 
$$\nw_\$(\varepsilon,d)\ge\sum_{k=1}^{\nsw(\varepsilon,d)}
\$\left(\varepsilon\sqrt{\tfrac{\lambda_{d,1}}{\lambda_{d,k}}},d\right).$$
Hence, $\nw_\$(\varepsilon,d)\ge\max\big\{\nsw(\varepsilon,d)\$(1,d),\,\$(\varepsilon,d)\big\}.$
\end{lemma}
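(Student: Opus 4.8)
The plan is to exploit the restriction of an arbitrary nonadaptive information $N$ to finitely many coordinates of the eigenbasis and to compare it with the optimal information $N_n^d$ from \eqref{optinfo}. Fix $\varepsilon\in(0,1)$ and $d\ge 1$, and write $m=\nsw(\varepsilon,d)$ for brevity. Take any nonadaptive information $N$ using functionals $L_1,\dots,L_n$ with precisions $\sigma_1,\dots,\sigma_n$ and with $\rw(N)\le\varepsilon\|S_d\|=\varepsilon\sqrt{\lambda_{d,1}}$; by the reduction to nonadaptive information recalled in the excerpt it suffices to bound $\cw_\$(N)=\sum_{i=1}^n\$(\sigma_i,d)$ from below for every such $N$. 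First I would argue that necessarily $n\ge m$: if $n<m$, then since each $L_i$ annihilates a subspace of codimension at most $n$, there is a unit element $h$ in $\mathrm{span}\{f_{d,1}^*,\dots,f_{d,n+1}^*\}$ with $L_i(h)=0$ for all $i$, whence $\|S_d(h)\|_{G_d}\ge\sqrt{\lambda_{d,n+1}}>\varepsilon\sqrt{\lambda_{d,1}}$ by the definition of $m$ in \eqref{minn}, contradicting $\rw(N)\le\varepsilon\sqrt{\lambda_{d,1}}$. This already gives the cruder bound $\cw_\$(N)\ge n\ge m$, but we need the precise term-by-term estimate.

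The key step is a rearrangement/pigeonhole argument comparing the $\sigma_i$ with the thresholds $\varepsilon\sqrt{\lambda_{d,1}/\lambda_{d,k}}$. I would order the precisions increasingly, say $\sigma_{i_1}\le\sigma_{i_2}\le\cdots\le\sigma_{i_n}$, and show that for each $k=1,\dots,m$ one has $\sigma_{i_k}\le\varepsilon\sqrt{\lambda_{d,1}/\lambda_{d,k}}$, equivalently $\sigma_{i_k}^{-1}\ge\varepsilon^{-1}\sqrt{\lambda_{d,k}/\lambda_{d,1}}$. Suppose this fails for some $k\le m$, so that the $n-k+1$ largest precisions $\sigma_{i_k},\dots,\sigma_{i_n}$ all exceed $\varepsilon\sqrt{\lambda_{d,1}/\lambda_{d,k}}$. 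Then among the functionals $L_i$ whose precision is at most $\varepsilon\sqrt{\lambda_{d,1}/\lambda_{d,k}}$ there are at most $k-1$ of them; consider the element $h$ of unit norm chosen in $\mathrm{span}\{f_{d,1}^*,\dots,f_{d,k}^*\}$ that is annihilated by those (at most $k-1$) "accurate" functionals, and among such $h$ pick one maximizing $\|S_d(h)\|_{G_d}$ — this max is at least $\sqrt{\lambda_{d,k}}$ since we are intersecting a $k$-dimensional space with at most $k-1$ hyperplanes inside the span of the top $k$ eigenelements. By homogeneity, replacing $h$ with $\varepsilon\sqrt{\lambda_{d,1}/\lambda_{d,k}}\,h/\|h\|$... more carefully: scale $h$ so that $|L_i(h)|\le\sigma_i$ holds also for the remaining "inaccurate" functionals — since those have $\sigma_i>\varepsilon\sqrt{\lambda_{d,1}/\lambda_{d,k}}$ and $|L_i(h)|\le\|h\|_{F_d}$, it suffices to take $\|h\|_{F_d}\le\varepsilon\sqrt{\lambda_{d,1}/\lambda_{d,k}}$; with that normalization $\|S_d(h)\|_{G_d}\ge\varepsilon\sqrt{\lambda_{d,1}/\lambda_{d,k}}\cdot\sqrt{\lambda_{d,k}}=\varepsilon\sqrt{\lambda_{d,1}}$, and in fact the inequality is strict because $k\le m$ forces $\sqrt{\lambda_{d,k}}>0$ and the span argument can be made strict, contradicting $\rw(N)\le\varepsilon\sqrt{\lambda_{d,1}}$ via \eqref{radworst}. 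Hence $\sigma_{i_k}\le\varepsilon\sqrt{\lambda_{d,1}/\lambda_{d,k}}$ for $k=1,\dots,m$.

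Using monotonicity of $\$(\cdot,d)$ in $\sigma^{-1}$, from $\sigma_{i_k}\le\varepsilon\sqrt{\lambda_{d,1}/\lambda_{d,k}}$ we get $\$(\sigma_{i_k},d)\ge\$\bigl(\varepsilon\sqrt{\lambda_{d,1}/\lambda_{d,k}},d\bigr)$, and therefore
$$\cw_\$(N)=\sum_{i=1}^n\$(\sigma_i,d)\ge\sum_{k=1}^m\$(\sigma_{i_k},d)
\ge\sum_{k=1}^m\$\!\left(\varepsilon\sqrt{\tfrac{\lambda_{d,1}}{\lambda_{d,k}}},d\right).$$
Taking the infimum over all admissible nonadaptive $N$ yields the first displayed inequality of the lemma. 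For the second: since $\lambda_{d,k}\le\lambda_{d,1}$ we have $\varepsilon\sqrt{\lambda_{d,1}/\lambda_{d,k}}\ge\varepsilon$, so each summand is at least $\$(1,d)$ once we use $\$\ge\$(1,d)$ for arguments... rather, bound each of the $m$ summands below by $\$(1,d)$ using monotonicity together with $\varepsilon\sqrt{\lambda_{d,1}/\lambda_{d,k}}$ possibly exceeding $1$ — here one uses that $\$(\cdot,d)$ is nonincreasing in $\sigma$ hence $\$(\text{anything}\ge 1,d)\le\$(1,d)$... this direction is backwards. The clean route: for the first term $k=1$ the argument is $\varepsilon\sqrt{\lambda_{d,1}/\lambda_{d,1}}=\varepsilon$, giving the summand $\$(\varepsilon,d)$, so $\cw_\$(N)\ge\$(\varepsilon,d)$. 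Separately, each threshold satisfies $\varepsilon\sqrt{\lambda_{d,1}/\lambda_{d,k}}\ge\varepsilon$ but can be less than $1$; however we may instead note the trivial chain $\cw_\$(N)\ge\sum_{i=1}^n\$(\sigma_i,d)\ge n\cdot\$(+\infty,d)$ is too weak, so I would argue directly that at least $m$ functionals are needed (shown above, $n\ge m$) and each costs at least $\$(1,d)$ because $\$\ge 1$... no — one needs $\$(\sigma_i,d)\ge\$(1,d)$, which holds only when $\sigma_i\le 1$. Absent that, simply use $\$\ge 1=\$(\text{large }\sigma,d)$ is again the wrong direction. I expect the intended second bound follows from the first by dropping all but the smallest/largest thresholds appropriately: $\max\{m\,\$(1,d),\$(\varepsilon,d)\}$ — the $\$(\varepsilon,d)$ piece is the $k=1$ term as above, and the $m\,\$(1,d)$ piece would use $\varepsilon\sqrt{\lambda_{d,1}/\lambda_{d,k}}\le 1$ for the relevant range so that $\$(\varepsilon\sqrt{\lambda_{d,1}/\lambda_{d,k}},d)\ge\$(1,d)$; since $\lambda_{d,k}\ge\lambda_{d,1}$ fails in general, I would instead lower-bound the full sum $\sum_{k=1}^m\$(\varepsilon\sqrt{\lambda_{d,1}/\lambda_{d,k}},d)$ by its smallest summand times $m$ only after checking that each summand is $\ge\$(1,d)$, which is where the precise relation $\varepsilon\sqrt{\lambda_{d,1}/\lambda_{d,k}}\le 1$ for $k\le m$ — equivalent to $\lambda_{d,k}\ge\varepsilon^2\lambda_{d,1}$, which is exactly the content of $k\le m$ by \eqref{minn}! — comes in. So the main obstacle is getting the normalization in the span/pigeonhole argument exactly right; once $\sigma_{i_k}\le\varepsilon\sqrt{\lambda_{d,1}/\lambda_{d,k}}\le 1$ is in hand for $k\le m$, both bounds drop out by monotonicity of $\$$.
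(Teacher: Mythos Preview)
Your approach is correct and essentially identical to the paper's: order the precisions, kill the $k-1$ most accurate functionals on a subspace where $\|S_d\|\ge\sqrt{\lambda_{d,k}}$, scale $h$ so that $\|L_i\|\le 1$ makes the remaining constraints automatic, and conclude $\sigma_{(k)}\le\varepsilon\sqrt{\lambda_{d,1}/\lambda_{d,k}}$. The paper takes $\|h\|=\sigma_k$ directly (rather than your contradiction threshold $\varepsilon\sqrt{\lambda_{d,1}/\lambda_{d,k}}$), obtaining $\rw(N)\ge\sigma_k\sqrt{\lambda_{d,k}}$ in one stroke and thereby sidestepping the strict-inequality issue you flagged; your eventual reading of the ``Hence'' clause via $\varepsilon\sqrt{\lambda_{d,1}/\lambda_{d,k}}<1$ for $k\le\nsw(\varepsilon,d)$ from \eqref{minn} is exactly what the paper uses.
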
 

\begin{proof}
Let $N$ be nonadaptive information using $m$ functionals $L_i$ with precisions $\sigma_i,$
such that $\rw(N)\le\varepsilon\|S_d\|.$ Assume without loss of generality that 
$\sigma_1\le\sigma_2\le\cdots\le\sigma_m.$ To prove the lemma, it suffices to show that
$$\sigma_k\le\varepsilon\sqrt{\tfrac{\lambda_{d,1}}{\lambda_{d,k}}}<1,
\qquad 1\le k\le\nsw(\varepsilon,d).$$

Let $k$ be as above. The inequality `$<$' follows from \eqref{minn}. 
To show `$\le$', define the linear subspace
$$V_{k-1}=\big\{f\in F:\;L_i(f)=0,\;1\le i\le k-1\,\big\}
\qquad(\mbox{where}\;V_{k-1}=F\;\mbox{if}\;k=1).$$
Obviously $\mathrm{codim}(V_{k-1})\le k-1.$
Since for any $h$ with $\|h\|_{F_d}\le\sigma$ is $|L_i(h)|\le\sigma,$ we have
\begin{eqnarray*}
{\rw(N_n)}
&\ge&\max\{\|S_d(h)\|_{F_d}:\;\|h\|_{F_d}\le 1,\,h\in V_{k-1},\,|L_i(h)|\le\sigma_i,\,k\le i\le m\}\\
&\ge&\max\{\|S_d(h)\|_{F_d}:\;\|h\|_{F_d}\le\sigma_k,\,h\in V_{k-1}\}\,\ge\,
\sigma_k\sqrt{\lambda_{d,k}},
\end{eqnarray*}
where we used the fact that the norm of $S_d$ restricted to the subspace $V_{k-1}$ 
is at least $\sqrt{\lambda_k}.$ Hence 
$\sigma_k\le\varepsilon\sqrt{\tfrac{\lambda_{d,1}}{\lambda_{d,k}}}$
since otherwise we would have 
$\rw(N)>\varepsilon\sqrt{\lambda_{d,1}}=\varepsilon\|S_d\|.$
\end{proof}

To achieve upper bounds on tractability, we will use noisy version of the nonadaptive information 
$N_n^d$ defined in \eqref{optinfo}. That is, for given $d,n$ and $\sigma_i$ we have 
$\by=(y_1,\ldots,y_n)\in N^d_n(f)$ iff
\begin{equation}\label{noisyinf}
y_i=\langle f,f_{d,i}^*\rangle_{F_d}+e_i,\quad\mbox{where}\quad |e_i|\le\sigma_i.
\end{equation}
The radius of $N_n^d$ can be estimated from above by the error of the approximation
$$\Phi^d_n(\by)=\sum_{i=1}^n y_i S_d(f_{d,i}^*).$$
Specifically, using $f=\sum_{i=1}^{\infty}\langle f,f_{d,i}^*\rangle_{F_d}f_{d,i}^*$ and orthogonality 
of $\{S_d(f_{d,i}^*)\}_{i\ge 1}$ in $G_d$ we have
\begin{eqnarray*}  \|S_d(f)-\Phi^d_n(\by)\|_{G_d}^2 &=&
   \bigg\|-\sum_{i=1}^n e_i S_d(f_{d,i}^*)+\sum_{i=n+1}^{+\infty}
    \langle f,f_{d,i}\rangle_{F_d}S_d(f_{d,i}^*)\bigg\|_{G_d}^2 \\
   &=& \sum_{i=1}^n\lambda_{d,i}|e_i|^2+
             \sum_{i=n+1}^{+\infty}\lambda_{d,i}\big|\langle f,f_{d,i}^*\rangle_{F_d}\big|^2.
\end{eqnarray*}
Taking the suprema with respect to $\|f\|_{F_d}\le 1$ and $|e_i|\le\sigma_i$ we obtain
\begin{equation}\label{error1}  \ew(N^d_n,\Phi^d_n)=
      \sqrt{\sum_{i=1}^n\sigma_i^2\lambda_{d,i}+\lambda_{d,n+1}}.  
\end{equation}
In particular, for exact information we restore the known result that 
$\ew(S_d,N^d_n,\Phi^d_n)=\sqrt{\lambda_{d,n+1}},$ which is the minimal error
when $n$ exact functional evaluations are used. The cost of such approximation is obviously
$\sum_{i=1}^n\$(\sigma_i,d).$
 
\subsection{Polynomial tractability}
We use the following asymptotic notation. For two nonnegative functions of 
$\varepsilon$ and $d$ we write 
$$\psi_1(\varepsilon,d)\preccurlyeq\psi_2(\varepsilon,d)\qquad\mbox{iff}\qquad 
{\psi_1(\varepsilon,d)}\le A\,{\psi_2(\varepsilon,d)},$$
for some $A<+\infty$ and all $\varepsilon\in(0,1)$ and $d\ge 1.$

\begin{theorem}\label{thm:wwpoly}
Consider a multivariate problem $\mathcal S=\{S_d\}_{d\ge 1}.$ 
\begin{itemize}
\item[(i)] 
The problem with noisy information is polynomially tractable if and only if 
\begin{itemize}
\item[$\bullet$] it is polynomially tractable for exact information, and
\item[$\bullet$] the cost function grows polynomially in $\sigma^{-1}$ and $d.$
\end{itemize} \smallskip
\item[(ii)]
The problem with noisy information is strongly polynomially tractable if and only if
\begin{itemize}
\item[$\bullet$] it is strongly polynomially tractable for exact information, and
\item[$\bullet$] the cost function grows polynomially in 
$\sigma^{-1}$ and is bounded in $d$ for any $\sigma>0.$ 
\end{itemize} \smallskip
\item[(iii)]
Suppose that $\nsw(\varepsilon,d)\le Cd^q\varepsilon^{-p}$ and 
$\$(\sigma,d)\le 1+Dd^t\sigma^{-2s}.$ \\ If $p=0$ and $s=0$ then 
$\nw_\$(\varepsilon,d)\preccurlyeq d^{t+q};$ otherwise
$$  \nw_\$(\varepsilon,d)\,\preccurlyeq\, d^{t+q(s+1)}
             \left\{\begin{array}{rl} 
               \varepsilon^{-p(s+1)}, &\; p(s+1)>2s, \\
               \ln^{s+1}(1/\varepsilon)\,\varepsilon^{-2s}, &\; p(s+1)=2s,\\
               \varepsilon^{-2s}, &\; p(s+1)<2s.
           \end{array}\right.
$$
\end{itemize}
\end{theorem}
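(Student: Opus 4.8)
The plan is to prove part~(iii) first, since it is the quantitative engine, and then to read off parts~(i) and~(ii) from it together with Lemma~\ref{simplelemma}. For the upper bound in~(iii) I would use the noisy optimal information $N^d_n$ from~\eqref{noisyinf} with the algorithm $\Phi^d_n$, whose worst case error is given exactly by~\eqref{error1}. The whole question then reduces to choosing the number $n$ of observed coefficients and the precisions $\sigma_1,\dots,\sigma_n$ so that $\sum_{i=1}^n\sigma_i^2\lambda_{d,i}+\lambda_{d,n+1}\le\varepsilon^2\lambda_{d,1}$ while $\sum_{i=1}^n\$(\sigma_i,d)$ is as small as possible, the only information about the eigenvalues being the decay encoded in $\nsw(2^{-j},d)\le Cd^q2^{pj}$.

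When $s=0$ the cost $\$(\sigma,d)\le1+Dd^t$ does not really depend on the precision, so it suffices to take $n=\nsw(\varepsilon/\sqrt2,d)$ coefficients with the common precision $\sigma=\varepsilon/\sqrt{2n}\in(0,1)$: then \eqref{error1} gives error $\le\varepsilon\|S_d\|$ at cost $n\,\$(\sigma,d)\le n(1+Dd^t)\preccurlyeq d^{q+t}\varepsilon^{-p}$, which is the asserted bound (and, for $p=0$, its special case). For $s>0$ I would use a dyadic block construction: set $n_0=0$ and $n_j=\lceil Cd^q2^{pj}\rceil\ge\nsw(2^{-j},d)$, so that $\lambda_{d,n_j+1}\le4^{-j}\lambda_{d,1}$ by~\eqref{minn}; fix $J$ with $2^J\asymp\varepsilon^{-1}$ so that $\lambda_{d,n_J+1}\le\tfrac12\varepsilon^2\lambda_{d,1}$, observe the coefficients $1,\dots,n_J$, and on the block $B_j=\{n_{j-1}+1,\dots,n_j\}$, where $\lambda_{d,i}\le4^{-(j-1)}\lambda_{d,1}$, use the common precision $\tau_j=\min\bigl(\sqrt{c}\,2^{j/(s+1)},1\bigr)$, the constant $c>0$ being fixed by the error budget $\sum_j\tau_j^2\sum_{i\in B_j}\lambda_{d,i}\le\tfrac12\varepsilon^2\lambda_{d,1}$. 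The profile $\tau_j\propto2^{j/(s+1)}$ is precisely the one minimising $\sum_j|B_j|\tau_j^{-2s}$ under that constraint (a Hölder/Lagrange computation). Substituting $c$ back, the part of the cost coming from the $Dd^t\sigma^{-2s}$ term of $\$$ is
$$
  Dd^t\Bigl(\tfrac{8}{\varepsilon^2}\Bigr)^{s}\Bigl(\sum_{j=1}^J|B_j|\,4^{-js/(s+1)}\Bigr)^{s+1}
  \ \preccurlyeq\ d^{t}\varepsilon^{-2s}\Bigl(d^q\sum_{j=1}^J2^{\beta j}\Bigr)^{s+1},
  \qquad\beta:=p-\tfrac{2s}{s+1},
$$
while the $+1$ term together with the capped blocks contributes only $\preccurlyeq n_J(1+Dd^t)\preccurlyeq d^{q+t}\varepsilon^{-p}$. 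Since $\beta>0\iff p(s+1)>2s$ and $2^J\asymp\varepsilon^{-1}$, the geometric sum $\sum_{j\le J}2^{\beta j}$ equals $\asymp\varepsilon^{-\beta}$, $\asymp\ln(1/\varepsilon)$, or $\asymp1$ in the three regimes; raising to the power $s+1$ and using $\beta(s+1)=p(s+1)-2s$ produces exactly the three-case estimate, which dominates the $+1$ contribution. This proves~(iii).

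Parts~(i) and~(ii) then follow quickly. For the ``if'' direction of~(i) one feeds the two polynomial hypotheses into~(iii) (matching the exponent of $\sigma^{-1}$) to get $\nw_\$(\varepsilon,d)\preccurlyeq d^{a}\varepsilon^{-b}$ for some $a,b\ge0$, i.e.\ polynomial tractability with noise. For ``only if'', Lemma~\ref{simplelemma} gives $\nsw(\varepsilon,d)\le\nsw(\varepsilon,d)\,\$(1,d)\le\nw_\$(\varepsilon,d)$ (using $\$\ge1$) and $\$(\varepsilon,d)\le\nw_\$(\varepsilon,d)$, so a polynomial bound $\nw_\$(\varepsilon,d)\le Cd^q\varepsilon^{-p}$ forces both polynomial tractability for exact information and the polynomial growth $\$(\sigma,d)\le1+Cd^q\sigma^{-p}$ in $\sigma^{-1}$ and $d$. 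Part~(ii) is the same argument with the variable $d$ suppressed: the ``if'' direction is~(iii) with $q=t=0$, and the ``only if'' direction turns a $d$-free polynomial bound on $\nw_\$$ into a $d$-free bound on $\nsw$ and on $\$(\sigma,\cdot)$, which is exactly strong polynomial tractability for exact information together with polynomial growth of the cost in $\sigma^{-1}$ that is bounded in $d$.

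The one genuinely nonroutine step is the precision allocation in the $s>0$ case of~(iii): recognising that a dyadic decomposition driven by $\nsw(2^{-j},\cdot)$, combined with the scale-optimal profile $\tau_j\propto2^{j/(s+1)}$, is what makes the exponents come out, and in particular that the dichotomy $p(s+1)$ versus $2s$ is precisely the sign of $\beta=p-2s/(s+1)$ governing the geometric sum. Checking that the capping $\tau_j\le1$ (needed for the bound on $\$(\tau_j,d)$ via $\$(1,d)\le1+Dd^t$) and the $+1$ term of $\$$ do not spoil the estimate, and that the formula degenerates correctly when $p=s=0$, is routine bookkeeping.
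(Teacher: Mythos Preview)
Your proposal is correct and follows the same overall strategy as the paper: both arguments use the information $N_n^d$ with the algorithm $\Phi_n^d$ from~\eqref{error1}, optimize the precisions under the error constraint, and then feed in the eigenvalue decay implied by $\nsw(\varepsilon,d)\le Cd^q\varepsilon^{-p}$; parts~(i) and~(ii) are deduced from~(iii) and Lemma~\ref{simplelemma} in exactly the way you describe.

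The only real difference is in how the optimization and the ensuing sum are handled. The paper performs a continuous Lagrange minimisation over the individual $\sigma_i$, obtaining the closed form $\hat\sigma_k^{-2}\propto(\lambda_{d,k}/\lambda_{d,1})^{1/(s+1)}$ and the key cost expression $\bigl(\sum_{i\le n}(\lambda_{d,i}/\lambda_{d,1})^{s/(s+1)}\bigr)^{s+1}\varepsilon^{-2s}$, and then estimates that sum by an integral after inserting the bound~\eqref{lambdasb}. Your dyadic scheme, with block-constant precisions $\tau_j\propto 2^{j/(s+1)}$ on the levels determined by $\nsw(2^{-j},d)$, is the discrete counterpart of the same computation: your sum $\sum_j|B_j|\,4^{-js/(s+1)}$ is precisely a block approximation of the paper's $\sum_i(\lambda_{d,i}/\lambda_{d,1})^{s/(s+1)}$, and the trichotomy in $\beta=p-2s/(s+1)$ replaces the paper's integral comparison with exponent $2s/(p(s+1))$. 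Your packaging has the small advantage that the constraint $\sigma_i<1$ is handled by an explicit cap whose extra cost is visibly dominated, whereas the paper verifies $\hat\sigma_k<1$ directly; conversely, the paper's continuous formula makes the optimal precision profile more transparent. Either route yields the stated exponents.
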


\begin{proof}
Suppose that the problem is polynomially tractable for noisy information, i.e., 
$$\nw(\varepsilon,d)\le Cd^q\varepsilon^{-p}.$$ Then we have by Lemma \ref{simplelemma} 
that, on one hand, 
$$\nsw(\varepsilon,d)\le\nw(\varepsilon,d)\le Cd^q\varepsilon^{-p}$$ 
and, on the other hand, $$\$(\sigma,d)\le\nw(\sigma,d)\le 1+Cd^q\sigma^{-p}.$$
This proves the necessary conditions in (i) and (ii). The sufficient conditions follow from (iii). 

\smallskip
In the proof of (iii) we distinguish several cases. 

If $s=0$ and $p\ge 0$ then exact observations are possible at cost $1+Dd^t,$ and therefore 
$$\nw_\$(\varepsilon,d)\le (1+Dd^t)\nsw(\varepsilon,d)\le (1+D)Cd^{t+q}\varepsilon^{-p}
\preccurlyeq d^{t+q}\varepsilon^{-p}.$$ 

Assume $s>0.$ We first optimize the cost of obtaining an $\varepsilon$-approximation using 
information $N_n^d$ with precisions $\sigma_i\le 1$ together with the algorithm $\Phi_n^d.$
Let $\ew(S_d,N^d_n,\Phi^d_n)\le\varepsilon\sqrt{\lambda_1}.$
The cost of $N_n^d$ is upper bounded by
$$ \psi_n(\sigma_1,\ldots,\sigma_n)=n+Dd^t\sum_{i=1}^n\sigma_i^{-2s}.$$
Minimizing $\psi_n$ with respect to the condition
$\ew(S_d,N^d_n,\Phi^d_n)^2=\sum_{i=1}^n\sigma_i^2\lambda_{d,i}+\lambda_{d,n+1}
\le\lambda_{d,1}\varepsilon^2$ 
we obtain the optimal values
$$\hat\sigma_k^{-2}=
\bigg(\frac{\lambda_{d,k}}{\lambda_{d,1}}\bigg)^\frac{1}{s+1}
      \sum_{i=1}^n\bigg(\frac{\lambda_{d,i}}{\lambda_{d,1}}\bigg)^\frac{s}{s+1}
      \bigg(\varepsilon^2-\frac{\lambda_{d,n+1}}{\lambda_{d,1}}\bigg)^{-1},\qquad 1\le k\le n,$$ and
$$\psi_n(\hat\sigma_1,\ldots,\hat\sigma_n)=n+Dd^t  
    \left(\sum_{i=1}^n\bigg(\frac{\lambda_{d,i}}{\lambda_{d,1}}\bigg)^\frac{s}{s+1}\right)^{s+1}
    \bigg(\varepsilon^2-\frac{\lambda_{d,n+1}}{\lambda_{d,1}}\bigg)^{-s}.$$
Now, taking $n=\max\big(2,\nsw(\varepsilon/\sqrt2,d)\big)$ we have 
$\frac{\lambda_{d,n+1}}{\lambda_{d,1}}\le\frac{\varepsilon^2}{2}<\frac{\lambda_{d,n}}{\lambda_{d,1}}$ and
$$\hat\sigma_k^{-2}\ge\hat\sigma_n^{-2}=\bigg(\frac{\lambda_{d,n}}{\lambda_{d,1}}\bigg)^\frac{1}{s+1}
      \sum_{i=1}^n\bigg(\frac{\lambda_{d,i}}{\lambda_{d,1}}\bigg)^\frac{s}{s+1}
      \bigg(\varepsilon^2-\frac{\lambda_{d,n+1}}{\lambda_{d,1}}\bigg)^{-1}\ge 
      n\bigg(\frac{\lambda_{d,n}}{\lambda_{d,1}}\bigg)\varepsilon^{-2}>\frac n2\ge 1,$$
i.e., $0<\sigma_1\le\cdots\le\sigma_n<1.$
Then an $\varepsilon$-approximation is obtained at cost
\begin{equation}\label{compform}
  \cw_\$(N_n^d)\,\le\,n+2^sDd^t\left(\sum_{i=1}^n
  \left(\frac{\lambda_{d,i}}{\lambda_{d,1}}\right)^\frac{s}{s+1}\right)^{s+1}\varepsilon^{-2s}.
\end{equation}

\medskip
Assume now that we have polynomial tractability for exact information, i.e., 
$$ \nsw(\varepsilon,d)\le C d^q\varepsilon^{-p}\quad\mbox{for}\;
        d\ge 1\;\mbox{and}\;\varepsilon\in(0,1). $$ 
        
If $p=0$ then $\lambda_j=0$ for $j\ge\lfloor Cd^q\rfloor+1$ 
and we have from \eqref{compform} that
$$\cw_\$(N_n^d)\,\preccurlyeq\,d^t\big\lfloor Cd^q\big\rfloor^{s+1}\varepsilon^{-2s}\,
\preccurlyeq\,d^{t+q(s+1)}\varepsilon^{-2s}.$$

Assume $p>0.$ We need to estimate the ratios $\lambda_{d,j}/\lambda_{d,1}.$  
For $1\le j\le\lfloor Cd^q\rfloor+1$ we have $\lambda_{d,j}/\lambda_{d,1}\le 1.$ 
Let $j\ge\lfloor Cd^q\rfloor+2.$ Let $\varepsilon_j\in(0,1)$ be such that $j=Cd^q\varepsilon_j^{-p}+1.$ 
Then $j-1\ge\nsw(\varepsilon_j,d),$ which implies
$$\sqrt\frac{\lambda_{d,j}}{\lambda_{d,1}}\le\varepsilon_j=\left(\frac{Cd^q}{j-1}\right)^{1/p}.$$
Hence for all $j\ge 1$
\begin{equation}\label{lambdasb}
\frac{\lambda_{d,j}}{\lambda_{d,1}}\le\min\left(1,\Big(\frac{Cd^q}{j-1}\Big)^{2/p}\right).
\end{equation}

Assuming $C\ge 2$ (which can be done without loss of generality) 
the estimates \eqref{compform} and \eqref{lambdasb} give
$$ \nw_\$(\varepsilon,d)\le \left\lfloor Cd^q\big(\tfrac{\varepsilon}{\sqrt2}\big)^{-p}
    \right\rfloor+2^sDd^t
    \Bigg(\lfloor Cd^q\rfloor+1+
    \sum_{j=\lfloor Cd^q\rfloor+2}^{\left\lfloor Cd^q({\varepsilon}/{\sqrt2})^{-p}\right\rfloor}
    \left(\frac{Cd^q}{j-1}\right)^\frac{2s}{p(s+1)}\Bigg)^{s+1}
    \left(\frac1\varepsilon\right)^{2s}. $$
Using the formula
$$
    \sum_{i=k+1}^{n}j^{-\beta}\le\int_k^nx^{-\beta}\,\mathrm dx=\left\{
    \begin{array}{rl}\ln n-\ln k, &\quad\beta=1, \\ 
    \frac{n^{1-\beta}-k^{1-\beta}}{1-\beta},&\quad\beta\ne 1, \end{array}\right. \\ 
$$
with $2\le k+1\le n$ and $\beta=\frac{2s}{p(s+1)},$ we finally obtain the desired upper bounds. 
\end{proof}


\begin{remark}\label{remopt}
The algorithm $\Phi_n^d$ is not optimal for $N_n^d$ if information is contaminated by 
noise~\eqref{noisyinf}. Indeed, we have by \eqref{radworst} that 
\begin{eqnarray}\label{rpom}
\rw(N_n^d)&=&\max\bigg\{\bigg(\sum_{i=1}^\infty a_i^2\lambda_{d,i}\bigg)^{1/2}:
\;\sum_{i=1}^na_i^2\le 1,\;|a_i|\le\sigma_i,\,1\le i\le n\bigg\} \\ \nonumber
&=&\sqrt{\sum_{i=1}^\ell\sigma_i^2\lambda_{d,i}+
\bigg(1-\sum_{i=1}^\ell\sigma_i^2\bigg)\lambda_{d,\ell+1}}
\,=\,\sqrt{\sum_{i=1}^\ell\sigma_i^2(\lambda_{d,i}-\lambda_{d,\ell+1})+\lambda_{d,\ell+1}},
\end{eqnarray}
where $\ell$ is the largest $k$ satisfying $\sum_{i=1}^k\sigma_i^2<1,$ cf. \eqref{error1}. Nevertheless, 
$\Phi_n^d$ gives optimal exponents of tractability when one relies only on information $N_n^d.$ 

To see this, let $N_m^d$ be information \eqref{noisyinf} that uses precisions $\sigma_i$ and whose radius 
is at most $\varepsilon\sqrt{\lambda_{d,1}}.$ Then $m\ge n=\nsw(\varepsilon,d).$ A crucial observation 
is that, in view of \eqref{minn} and \eqref{rpom}, we then have $\sum_{i=1}^n\sigma_i^2\le 1.$ Hence 
the cost of $N_m^d$ can be lower bounded by minimization of $n+\sum_{i=1}^n\sigma_i^{-2s}$ (which 
does not exceed $\cw(N_m^d)$) with respect to the condition 
$\sum_{i=1}^n\sigma_i^2\lambda_{d,i}\le\varepsilon^2\lambda_{d,1}$ 
(which is weaker than $\rw(N_m^d)\le\varepsilon\sqrt{\lambda_{d,1}}$). In this way we obtain
$$ \cw_\$(N_m^d)\,\ge\,n+Dd^t\left(\sum_{i=1}^n
  \left(\frac{\lambda_{d,i}}{\lambda_{d,1}}\right)^\frac{s}{s+1}\right)^{s+1}\varepsilon^{-2s}.$$
This bound differs from the upper bound in \eqref{compform} at most by the factor of $2^s,$ which
does not influence the exponents of polynomial tractability.
\end{remark}

We believe that the tractability exponents in (iii) of Theorem \ref{thm:wwpoly} are best possible, 
but a formal justification of this fact is missing. The point is that these exponents are obtained using 
particular information $N_n^d.$ On one hand, Proposition~\ref{prop1} of Appendix shows that 
this information is indeed optimal in some situations, even if we fix precisions $\sigma_i.$ 
On the other hand, the following example shows that the cost can be sometimes significantly reduced 
by applying more sophisticated information.

\begin{example}
Suppose we approximate vectors $\vec x=(x_1,x_2)^T\in\mathbb R^2$ in the $\ell_2$ norm. 
Consider noisy information consisting of $2n$ observations 
$\mathbf y=(\vec y_1^{\,T},\ldots,\vec y_n^{\,T}),$ where
\begin{equation}\label{rotinf}
\mathbb R^2\ni\,\vec y_i=(R_i\vec x)^T+\vec e_i\,,\qquad\|\vec e_i\|_\infty\le\sigma<1,
\end{equation} and
$$R_i=\left(\begin{array}{rr}\cos\theta_i & -\sin\theta_i\\ \sin\theta_i & \cos\theta_i\end{array}\right),
\qquad\theta_i=\frac{\pi(i-1)}{2n},\qquad 1\le i\le n,$$
is the clockwise rotation through the angle $\theta_i.$ Note that $n=1$ corresponds to 
$\mathbf y=\vec x+\vec e,$ $\|\vec e\,\|_\infty\le\sigma,$ which is the information exploited 
in the proof of Theorem \ref{thm:wwpoly} for this particular problem. 

Using geometrical arguments one can easily show that, given $n\ge 1$ and $\varepsilon\in(0,1),$ 
one has to use precision $\sigma_n(\varepsilon)=\varepsilon\cos\big(\frac{\pi}{4n}\big)$ to get an 
$\varepsilon$-approximation of $\vec x,$ and then the cost of the $\varepsilon$-approximation equals
$\mathrm{cost}(n,\varepsilon)=2n\$(\sigma_n(\varepsilon)),$ where $\$$ is a cost function. 
Let $\$(\sigma)=1+\sigma^{-2s}$ with $s>0.$ Then the cost is
\begin{equation}\label{psicost}
\mathrm{cost}_s(n,\varepsilon)=2n\left(1+\big(\varepsilon\cos(\tfrac{\pi}{4n})\big)^{-2s}\right).
\end{equation}
Taking $n^*=\big\lceil\frac\pi 4\sqrt{2s}\,\big\rceil$ we have the asymptotic equality
$$\mathrm{cost}_s(n^*,\varepsilon)\approx\pi\sqrt{\frac{s}{2}}
\left(1+\sqrt{\mathrm e}\,\varepsilon^{-2s}\right)\quad\mbox{as}\quad s\to\infty,$$
where we used the fact that $\lim_{x\to 0}\left(\cos x\right)^{-1/x^2}=\sqrt\mathrm e.$ 
Hence for $0<\varepsilon<1$ we have
$$\frac{\mathrm{cost}_{s}(1,\varepsilon)}{\mathrm{cost}_{s}(n^*,\varepsilon)}
\approx\frac{2\big(1+2^s\varepsilon^{-2s}\big)}{\pi\sqrt{s/2}\,
\big(1+\sqrt{\mathrm e}\,\varepsilon^{-2s}\big)}
\approx\frac{2^{s+1}}{\pi\sqrt{s\,\mathrm e/2\,}}\,.$$
As we can see, for large $s$ the `rotated' information offers a serious improvement compared to 
the `un-rotated' information consisting of only $2$ observations as in the case of exact information. 
\end{example}

\subsection{Weak tractability and the curse of dimensionality}

\begin{theorem}\label{thm:wwweak}
Consider a multivariate problem $\mathcal S=\{S_d\}_{d\ge 1}.$
\begin{itemize}
\item[(i)] Suppose that the problem with noisy information is weakly tractable. Then
\begin{itemize}
\item[$\bullet$] it is weakly tractable for exact information, and
\item[$\bullet$] the cost function grows sub-exponentially in $\sigma^{-1}+d.$
\end{itemize}\smallskip
\item[(ii)] Suppose that 
\begin{itemize}
\item[$\bullet$] the problem is weakly tractable for exact information, and
\item[$\bullet$] the cost function grows polynomially in $\sigma^{-1}$ and sub-exponentially in $d.$
\end{itemize}
Then the same problem with noisy information is weakly tractable.
\smallskip
\item[(iii)] Suppose that
\begin{itemize}
\item[$\bullet$] the problem is strongly polynomially tractable for exact information with $p<2,$ and
\item[$\bullet$] the cost function grows sub-exponentially in $\sigma^{-1} +d.$
\end{itemize}
Then the same problem with noisy information is weakly tractable.
\end{itemize}
\end{theorem}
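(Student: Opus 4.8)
The plan is to prove the three parts separately. Part~(i), the ``only if'' direction, follows directly from Lemma~\ref{simplelemma}, which yields $\nw_\$(\varepsilon,d)\ge\nsw(\varepsilon,d)\,\$(1,d)\ge\nsw(\varepsilon,d)$ and $\nw_\$(\varepsilon,d)\ge\$(\varepsilon,d)$ for all $\varepsilon\in(0,1)$ and $d\ge1$ (using $\$\ge1$ and $\nsw\ge1$). Dividing the first inequality by $\varepsilon^{-1}+d$ shows that weak tractability of the noisy problem forces weak tractability for exact information. For the cost function, the bound $\$(\varepsilon,d)\le\nw_\$(\varepsilon,d)$ settles $\sigma=\varepsilon\in(0,1)$, while for $\sigma\ge1$ one combines monotonicity of $\$$ in $\sigma^{-1}$ with $\$(1,d)\le\nsw(\tfrac12,d)\,\$(1,d)\le\nw_\$(\tfrac12,d)$ (again Lemma~\ref{simplelemma}, at $\varepsilon=\tfrac12$), so that $\ln\$(\sigma,d)/(\sigma^{-1}+d)$ is bounded by a quantity built from $\nw_\$$ whose argument tends to infinity; in all cases it tends to $0$, which is sub-exponential growth of $\$$ in $\sigma^{-1}+d$.

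For the two ``if'' directions I would use, for fixed $\varepsilon\in(0,1)$ and $d\ge1$, the noisy information $N_n^d$ of \eqref{noisyinf} with a common precision $\sigma_i\equiv\delta\in(0,1)$ together with the algorithm $\Phi_n^d$, taking $n=\nsw(\varepsilon/\sqrt2,d)$ (finite since $S_d$ is compact, and $\ge1$). By \eqref{error1} and \eqref{minn},
$$\ew(S_d,N_n^d,\Phi_n^d)^2=\delta^2\sum_{i=1}^n\lambda_{d,i}+\lambda_{d,n+1}\le\Big(\delta^2\,\frac{\sum_{i=1}^n\lambda_{d,i}}{\lambda_{d,1}}+\frac{\varepsilon^2}{2}\Big)\lambda_{d,1},$$
so $(N_n^d,\Phi_n^d)$ gives an $\varepsilon$-approximation as soon as $\delta^2\sum_{i=1}^n\lambda_{d,i}\le\tfrac{\varepsilon^2}{2}\lambda_{d,1}$, at cost $n\,\$(\delta,d)$. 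In part~(iii) the hypothesis $p<2$ is exactly what lets this work with $\delta$ depending on $\varepsilon$ alone: strong polynomial tractability for exact information with exponent $p<2$ gives $\nsw(\varepsilon,d)\le C\varepsilon^{-p}$, hence, by the argument leading to \eqref{lambdasb} with $q=0$, $\lambda_{d,j}/\lambda_{d,1}\le\min\!\big(1,(C/(j-1))^{2/p}\big)$; since $2/p>1$ this yields $\sum_{j\ge1}\lambda_{d,j}\le K\lambda_{d,1}$ with $K\ge1$ \emph{independent of $d$} (the case $p=0$ being trivial, with finitely many nonzero eigenvalues). Taking $\delta=\varepsilon/\sqrt{2K}$ we obtain
$$\nw_\$(\varepsilon,d)\le\nsw(\varepsilon/\sqrt2,d)\,\$\big(\varepsilon/\sqrt{2K},d\big)\le C\,2^{p/2}\,\varepsilon^{-p}\,\$\big(\varepsilon/\sqrt{2K},d\big),$$
and, taking logarithms and dividing by $\varepsilon^{-1}+d$, the $\ln(1/\varepsilon)$ term vanishes (as $\ln(1/\varepsilon)=o(\varepsilon^{-1})$) and the $\ln\$(\varepsilon/\sqrt{2K},d)$ term is at most $\sqrt{2K}$ times $\ln\$(\varepsilon/\sqrt{2K},d)\big/\big((\varepsilon/\sqrt{2K})^{-1}+d\big)$, which $\to0$ by sub-exponential growth of $\$$ in $\sigma^{-1}+d$; hence weak tractability.

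Part~(ii) uses the same scheme but is more delicate, since weak tractability for exact information gives only the $d$-dependent bound $\sum_{i=1}^n\lambda_{d,i}\le n\lambda_{d,1}$, forcing $\delta=\varepsilon/\sqrt{2n}$, a precision that shrinks with $d$. Here I would use the cost hypothesis in the form $\$(\sigma,d)\le D\psi(d)\sigma^{-s}$ for $\sigma\in(0,1)$ with $\psi$ non-decreasing and $\ln\psi(d)=o(d)$ (this is precisely what polynomial growth in $\sigma^{-1}$ together with sub-exponential growth in $d$ provides), obtaining with $n=\nsw(\varepsilon/\sqrt2,d)$
$$\nw_\$(\varepsilon,d)\le n\,\$\big(\varepsilon/\sqrt{2n},d\big)\le D\,2^{s/2}\,\psi(d)\,n^{1+s/2}\,\varepsilon^{-s}.$$
Taking logarithms and dividing by $\varepsilon^{-1}+d$: the $\ln\psi(d)$ term $\to0$ (being $\le\ln\psi(d)/d$ when $d\to\infty$, and negligible when $d$ stays bounded); the term $(1+s/2)\ln\nsw(\varepsilon/\sqrt2,d)$ divided by $\varepsilon^{-1}+d$ is $\le\sqrt2\,(1+s/2)$ times $\ln\nsw(\varepsilon/\sqrt2,d)\big/\big((\varepsilon/\sqrt2)^{-1}+d\big)$, which $\to0$ by weak tractability for exact information; and the $\ln(1/\varepsilon)$ term $\to0$ as before. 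Hence weak tractability.

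The step I expect to be the main obstacle is the conceptual one in part~(iii): recognizing that $p<2$ is exactly the hypothesis that makes $\sum_j\lambda_{d,j}/\lambda_{d,1}$ bounded uniformly in $d$, so that a precision $\delta$ depending on $\varepsilon$ alone suffices. Without such uniformity (as in part~(ii)) one must let $\delta\to0$ as $d\to\infty$, and then mere sub-exponential \emph{joint} growth of $\$$ in $\sigma^{-1}+d$ is no longer enough — one genuinely needs the two separate hypotheses of~(ii), polynomial growth in $\sigma^{-1}$ (to absorb the shrinking precision into a power of $n$) and sub-exponential growth in $d$ (to absorb the factor $\psi(d)$). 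A minor technical point in~(ii) is the bookkeeping that rewrites ``polynomial in $\sigma^{-1}$ and sub-exponential in $d$'' as a single bound $\$(\sigma,d)\le D\psi(d)\sigma^{-s}$ with $\psi$ sub-exponential.
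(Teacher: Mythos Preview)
Your proof is correct and follows essentially the same route as the paper. Part~(i) and part~(iii) are identical to the paper's arguments (Lemma~\ref{simplelemma} for~(i); the uniform bound $\sum_j\lambda_{d,j}/\lambda_{d,1}\le A$ from $p<2$ together with fixed precision $\varepsilon/\sqrt{2A}$ for~(iii)). In part~(ii) you take the slightly more direct path of using a \emph{uniform} precision $\delta=\varepsilon/\sqrt{2n}$, whereas the paper invokes the optimized precisions leading to \eqref{compform}; both yield a bound of the form $\kappa(d)\,n^{s+1}\varepsilon^{-2s}$ (in the paper's normalization of the exponent), so the conclusion is the same. Your remark about reading ``polynomial in $\sigma^{-1}$ and sub-exponential in $d$'' as a single bound $\$(\sigma,d)\le D\kappa(d)\sigma^{-2s}$ with $\kappa$ sub-exponential matches exactly how the paper proceeds.
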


\begin{proof} 
To show (i) we use Lemma \ref{simplelemma}. 
On one hand we have $\nsw(\varepsilon,d)\le\nw_\$(\varepsilon,d),$ which implies
$$\lim_{\varepsilon^{-1}+d\to\infty\,}\frac{\ln\big(\nsw(\varepsilon,d)\big)}{\varepsilon^{-1}+d}
\le\lim_{\varepsilon^{-1}+d\to\infty\,}
\frac{\ln\big(\nw_\$(\varepsilon,d)\big)-\ln\big(\$(1,d)\big)}{\varepsilon^{-1}+d}=0.$$
On the other hand $\$(\sigma,d)\le\nw_\$(\sigma,d),$ which implies
$$\lim_{\sigma^{-1}+d\to\infty\,}\frac{\ln\big(\$(\sigma,d)\big)}{\sigma^{-1}+d}
\le\lim_{\sigma^{-1}+d\to\infty\,}\frac{\ln\big(\nw_\$(\sigma,d)\big)}{\sigma^{-1}+d}=0.$$

Now we show (ii). Suppose that $\$(\sigma,d)\le 1+D\sigma^{-2s}\kappa(d)$ where 
$\lim_{d\to\infty}\ln\big(\kappa(d)\big)/d=0.$ Proceeding as in the proof of 
(iii) of Theorem \ref{thm:wwpoly} we get from \eqref{compform} that 
$$\nw_\$(\varepsilon,d)\le n+2^sD\,\kappa(d)\,n^{s+1}\varepsilon^{-2s}\quad\mbox{where}\quad 
n=\nsw\big(\varepsilon/\sqrt 2,d\big).$$ 
Hence, if the problem is weakly tractable for exact information, then
\begin{eqnarray*}
\lim_{\varepsilon^{-1}+d\to\infty\,}\frac{\ln\big(\nw_\$(\varepsilon,d)\big)}{\varepsilon^{-1}+d}&=&
\lim_{\varepsilon^{-1}+d\to\infty\,}\frac{\ln\big(\kappa(d)\big)+(s+1)\ln n+
2s\ln(1/\varepsilon)}{\varepsilon^{-1}+d}\\
&=&(s+1)\lim_{\varepsilon^{-1}+d\to\infty\,}
\frac{\ln\big(\nsw(\varepsilon/\sqrt 2,d)\big)}{\varepsilon^{-1}+d}=0,
\end{eqnarray*}
which means that the problem with noisy information is also weakly tractable.

To show (iii), suppose that the problem with noisy information is strongly tractable for 
exact information, $\nsw(\varepsilon,d)\le C\varepsilon^{-p}$ where $p<2.$ Then, by \eqref{lambdasb}, 
$$A:=\sum_{j=1}^\infty\frac{\lambda_{d,j}}{\lambda_{d,1}}\le 
1+C^{2/p}\sum_{j=1}^\infty j^{-2/p}\le 1+\frac{p\,C^{2/p}}{2-p}<+\infty.$$
Let $$n=\nsw\bigg(\frac{\varepsilon}{\sqrt 2},d\bigg)\le C\bigg(\frac{\sqrt 2}{\varepsilon}\bigg)^p.$$ 
For the algorithm $\Phi_n^d$ using noisy information $N_n^d$ with fixed precision 
$\sigma=\frac{\varepsilon}{\sqrt{2A}}$ we have by \eqref{error1} that
$$\ew(S_d,N_n^d,\Phi_n^d)=\sqrt{\sigma^2\sum_{i=1}^n\lambda_{d,i}+\lambda_{d,n+1}}\le
\sqrt{\lambda_{d,1}}\,\sqrt{\sigma^2A+\frac{\lambda_{d,n+1}}{\lambda_{d,1}}}\le
\sqrt{\lambda_{d,1}}\,\sqrt{\frac12\varepsilon^2+\frac12\varepsilon^2}=
\sqrt{\lambda_{d,1}}\,\varepsilon,$$
and $$\nw_\$(\varepsilon,d)\le\cw(N_n^d)=n\,\$\bigg(\frac{\varepsilon}{\sqrt{2A}},d\bigg)\le 
C\bigg(\frac{\sqrt 2}{\varepsilon}\bigg)^p\$\bigg(\frac{\varepsilon}{\sqrt{2A}},d\bigg).$$ Hence 
\begin{eqnarray*}
\lim_{\varepsilon^{-1}+d\to\infty}\frac{\ln\big(\nw_\$(\varepsilon,d)\big)}{\varepsilon^{-1}+d}&\le&
\lim_{\varepsilon^{-1}+d\to\infty}\frac{\ln C+p\big(\tfrac12\ln 2+\ln(\frac1\varepsilon)\big)
+\ln\$\big(\frac{\varepsilon}{\sqrt{2A}},d\big)}{\varepsilon^{-1}+d}\\
&=&\sqrt{2A}\lim_{\varepsilon^{-1}+d\to\infty}\,\frac{\ln\$\big(\frac{\varepsilon}{\sqrt{2A}},d\big)}
{\frac{\sqrt{2A}}{\varepsilon}+d\sqrt{2A}}=0,
\end{eqnarray*}
where we used sub-exponential growth of the cost function.
\end{proof}

\begin{remark}
The sufficient conditions in (iii) of Theorem \ref{thm:wwweak} for weak tractability in the case 
of noisy information can be generalized as follows. Suppose that 
$$\nsw(\varepsilon,d)\le C\varepsilon^{-p}\kappa(d),$$ 
where $\kappa(d)$ grows sub-exponentially in $d.$ Then 
$$A_n^d:=\sum_{j=1}^n\frac{\lambda_{d,j}}{\lambda_{d,1}}
\,\preccurlyeq\,\big(\kappa(d)\big)^{\frac2p}\left\{\begin{array}{ll}1,&\;p<2,\\ 
\ln n,&\;p=2,\\n^{1-2/p},&\;p>2.\end{array}\right.$$
Applying the information $N_n^d$ and algorithm $\Phi_n^d$ with $n=\nsw(\frac{\varepsilon}{\sqrt 2},d)$
and fixed precision $\sigma=\frac{\varepsilon}{\sqrt{A_n^d}},$ as in the proof of 
Theorem~\ref{thm:wwweak}, we obtain that 
$\ew(S_d,N_n^d,\Phi_n^d)\le\sqrt{\lambda_{d,1}}\,\varepsilon$ and 
$$\nw_\$(\varepsilon,d)\le\cw(N_n^d)\preccurlyeq
\varepsilon^{-p}\kappa(d)\,\$\big(\hat\varepsilon,d\big),$$ 
where
$$\hat\varepsilon=\hat\varepsilon(\varepsilon,d)=\left\{\begin{array}{ll}
\varepsilon\,\big(\kappa(d)\big)^{-1/p},&\;p<2,\\
\varepsilon\,\big(\kappa(d)\ln(\kappa(d)\varepsilon^{-2}\big)^{-1/2},&\;p=2,\\
\varepsilon^{p/2}\big(\kappa(d)\big)^{-1/2},&\;p>2.\end{array}\right.$$
Hence the problem is weakly tractable for noisy information if the cost function satisfies
\begin{equation}\label{condweak}
\lim_{\varepsilon^{-1}+d\to\infty}\,\frac{\ln\$\big(\hat\varepsilon,d\big)}{\varepsilon^{-1}+d}=0.
\end{equation}
Observe that (iii) of Theorem \ref{thm:wwweak} is obtained by taking $p<2$ and $\kappa(d)=1,$ 
in which case $\hat\varepsilon=\varepsilon.$

It is not clear whether the condition \eqref{condweak} is not only sufficient, but also necessary 
for weak tractability.
\end{remark}

\medskip
Since intractability is defined as the lack of weak tractability, necessary and sufficient conditions
for a problem to be intractable follow immediately from Theorem \ref{thm:wwweak}.
We move to the curse of dimensionality. 
\begin{theorem}\label{thm:wwcurse}
Consider a multivariate problem $\mathcal S=\{S_d\}_{d\ge 1}.$
\begin{itemize}
\item[(i)] Suppose that 
\begin{itemize}
\item[$\bullet$] the problem with exact information suffers from the curse of dimensionality, or 
\item[$\bullet$] the cost function grows exponentially in $d$ for some $\sigma_0\ge 0.$
\end{itemize}
Then the same problem with noisy information also suffers from the curse.
\smallskip
\item[(ii)] Suppose the problem with noisy information suffers from the curse of dimensionality. 
Then
\begin{itemize}
\item[$\bullet$] the problem with exact information also suffers from the curse, or
\item[$\bullet$] the cost function grows faster than polynomially in $\sigma^{-1},$ 
or grows exponentially in $d.$ 
\end{itemize} 
\smallskip
\item[(iii)] Suppose the problem with noisy information suffers from the curse of dimensionality.
Then
\begin{itemize}
\item[$\bullet$] the problem is not strongly polynomially tractable for exact information, or 
\item[$\bullet$] the cost function grows exponentially in $d$ for some $\sigma_0\ge 0.$ 
\end{itemize}
\end{itemize}
\end{theorem}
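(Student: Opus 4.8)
All three parts combine the two estimates already at hand: the lower bound of Lemma~\ref{simplelemma}, which drives part~(i) directly, and an upper bound coming from the algorithm $\Phi_n^d$ run on the noisy information $N_n^d$ with a suitable uniform precision, which drives parts~(ii)--(iii) once these are read contrapositively. For~(i): if the problem with exact information suffers from the curse, fix $\varepsilon_0\in(0,1)$ with $\limsup_{d\to\infty}\ln\bigl(\nsw(\varepsilon_0,d)\bigr)/d>0$; since $\$\ge 1$, Lemma~\ref{simplelemma} gives $\nw_\$(\varepsilon_0,d)\ge\nsw(\varepsilon_0,d)\,\$(1,d)\ge\nsw(\varepsilon_0,d)$, so the curse passes to the noisy problem. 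If instead $\$$ grows exponentially in $d$ at a precision $\sigma_0$, then by monotonicity of $\$$ in $\sigma^{-1}$ we may take $\sigma_0\in(0,1)$, and Lemma~\ref{simplelemma} gives $\nw_\$(\sigma_0,d)\ge\$(\sigma_0,d)$, which is again exponential in $d$.

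For (ii) and (iii) we use the same gadget. Fix $\varepsilon_0\in(0,1)$, put $n=\nsw(\varepsilon_0/\sqrt2,d)\ge 1$ so that $\lambda_{d,n+1}\le\tfrac12\varepsilon_0^2\lambda_{d,1}$ by \eqref{minn}, and run $\Phi_n^d$ on $N_n^d$ with the uniform precision $\sigma=\varepsilon_0/\sqrt{2n}$. By \eqref{error1},
\[
\ew(S_d,N_n^d,\Phi_n^d)^2=\sigma^2\sum_{i=1}^n\lambda_{d,i}+\lambda_{d,n+1}
\le n\sigma^2\lambda_{d,1}+\tfrac12\varepsilon_0^2\lambda_{d,1}=\varepsilon_0^2\lambda_{d,1},
\]
so $\nw_\$(\varepsilon_0,d)\le n\,\$(\varepsilon_0/\sqrt{2n},d)$; it remains to show the right side is sub-exponential in $d$ for every fixed $\varepsilon_0$. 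For~(iii), contrapositively: if $\nsw(\varepsilon,d)\le C\varepsilon^{-p}$ and $\$$ grows sub-exponentially in $d$ at every fixed precision, then $n\le n_0:=C(\sqrt2/\varepsilon_0)^p$, a bound independent of $d$, so $\sigma\ge\varepsilon_0/\sqrt{2n_0}$ and $\nw_\$(\varepsilon_0,d)\le n_0\,\$(\varepsilon_0/\sqrt{2n_0},d)$, which by hypothesis is sub-exponential in $d$. For~(ii), contrapositively: assume there is no curse for exact information --- so $n=\nsw(\varepsilon_0/\sqrt2,d)$ grows sub-exponentially in $d$ --- and record the ``polynomial in $\sigma^{-1}$, sub-exponential in $d$'' hypothesis in the combined form $\$(\sigma,d)\le 1+D\kappa(d)\sigma^{-2s}$ with $\ln(\kappa(d))/d\to0$, as in the proof of Theorem~\ref{thm:wwweak}(ii). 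Since $\sigma=\varepsilon_0/\sqrt{2n}<1$,
\[
\nw_\$(\varepsilon_0,d)\le n\,\$\bigl(\tfrac{\varepsilon_0}{\sqrt{2n}},d\bigr)\le n+2^sD\varepsilon_0^{-2s}\kappa(d)\,n^{s+1},
\]
and the right side is sub-exponential in $d$ because $n$ and $\kappa(d)$ are. In each case $\ln\bigl(\nw_\$(\varepsilon_0,d)\bigr)/d\to0$, so the noisy problem has no curse.

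The one step that needs care is the last estimate in part~(ii): unlike in part~(iii), the precision $\sigma=\varepsilon_0/\sqrt{2n}$ shrinks with $d$, so one must check that the polynomial growth of $\$$ in $\sigma^{-1}$ does not destroy sub-exponentiality in $d$. It does not --- precisely because the absence of the curse for exact information forces $n$, hence $\sigma^{-2s}=(2n/\varepsilon_0^2)^s$, to be sub-exponential in $d$. The remaining checks, together with the bookkeeping needed when $\varepsilon_0\ge 1$ or $\sigma_0=0$ (disposed of by monotonicity of $\$$), are routine and parallel the computations in Theorems~\ref{thm:wwpoly} and~\ref{thm:wwweak}.
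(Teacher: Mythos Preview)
Your proof is correct and follows essentially the same approach as the paper: part~(i) via Lemma~\ref{simplelemma}, and parts~(ii)--(iii) contrapositively by running $\Phi_n^d$ on $N_n^d$ with $n=\nsw(\varepsilon_0/\sqrt2,d)$ to obtain the same bound $\nw_\$(\varepsilon_0,d)\le n+2^sD\kappa(d)n^{s+1}\varepsilon_0^{-2s}$. The only tactical difference is that you reach this bound directly with the uniform precision $\sigma=\varepsilon_0/\sqrt{2n}$, whereas the paper routes through the optimized precisions of \eqref{compform} and then crudely bounds $\sum_i(\lambda_{d,i}/\lambda_{d,1})^{s/(s+1)}\le n$; your route is slightly cleaner but equivalent.
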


\begin{proof}
To show (i) it suffices to use again Lemma \ref{simplelemma}. If the curse is present for exact 
information then, owing to $\nw_\$(\varepsilon_0,d)\ge\nsw(\varepsilon_0,d)\$(1,1),$ it is also present 
for noisy information. If the cost function grows exponentially in $d$ for some 
$\sigma_0>0$ then for $\varepsilon_0=\sigma_0$ we have
$$\limsup_{d\to\infty}\frac{\ln\big(\nw_\$(\varepsilon_0,d)\big)}{d}\ge
\limsup_{d\to\infty}\frac{\ln\big(\$(\sigma_0,d)\big)}{d}>0.$$

To show (ii), assume that there is no curse for exact information, and 
$\$(\sigma,d)\le 1+D\sigma^{-2s}\kappa(d),$ where $\lim_{d\to\infty}\ln(\kappa(d))/d=0.$ 
Then, applying the reasoning from the proof (ii) of Theorem \ref{thm:wwweak} we get that 
$\nw_\$(\varepsilon,d)\le n+2^sD\kappa(d)n^{s+1}\varepsilon^{-2s}$ with 
$n=\nsw(\varepsilon/\sqrt{2},d).$ Hence
$$\lim_{d\to\infty}\frac{\ln\big(\nw_\$(\varepsilon,d)\big)}{d}=
(s+1)\lim_{d\to\infty}\frac{\ln\big(\nsw(\varepsilon/\sqrt{2},d\big)}{d}=0,$$
which means that the problem with noisy information does not suffer from the curse.

And finally, to show (iii) we assume that the problem is strongly polynomially tractably for exact 
information, i.e., $\nsw(\varepsilon,d)\le C\varepsilon^{-p},$ and that $\$(\sigma,d)$ grows 
sub-exponentially in $d$ for all $\sigma>0.$ Using
$\sum_{j=1}^n\lambda_{d,j}/\lambda_{d,1}\le n$ and  
proceeding as in the proof of (iii) of Theorem \ref{thm:wwweak} we obtain 
for $n(\varepsilon)=\big\lceil C\big(\frac{\sqrt 2}{\varepsilon}\big)^p\big\rceil$ that 
$$\lim_{d\to\infty}\frac{\ln\big(\nw_\$(\varepsilon,d)\big)}{d}\le
\lim_{d\to\infty}\frac{\ln\$\big(\frac{\varepsilon}{2n(\varepsilon)},\,d\big)}{d}=0,$$
which means that there is no curse for noisy information.
\end{proof}

\section{Worst case setting with Gaussian noise}\label{sec:random}

In this section we assume that the noise is random.
That is, information about $f$ is given as $\by=(y_1,y_2,\ldots,y_{n(\by)})$ where
$$y_i=L_i(f;y_1,\ldots,y_{i-1})+e_i,\qquad e_i\sim\mathcal N\big(0,\sigma_i^2(y_1,\ldots,y_{i-1})\big),$$
and the noise coming from different observations is independent.  
The (total) cost of information $N=\{\pi_f\}_{f\in F},$ where $\pi_f$ is the probability distribution of 
information $\mathbf y$ for given $f,$ is defined as
$$\ca_\$(N)=\sup_{\|f\|_{F_d}\le 1}\,\int_Y
      \sum_{i=1}^{n(\by)}\$\big(\sigma_i(y_1,\ldots,y_{i-1})\big)\,\pi_f(\mathrm d\mathbf y),$$
and the error of an algorithm $\Phi$ using information $N$ as
$$\ea(S_d,N,\Phi)=\sup_{\|f\|_{F_d}\le 1}\,\bigg(\int_Y\|S_d(f)-\Phi(\by)\|_{G_d}^2\,\pi_f(d\mathbf y)\bigg)^{1/2}.$$
As before, $S_d$ are compact operators.

\smallskip
Define an \emph{auxiliary cost function} $\widehat\$$ 
in such a way that $\widehat\$(\sigma,d)$ is the complexity of approximating a real parameter 
$f\in\mathbb R$ in the current setting using the cost function $\$(\,\cdot\,,d).$ 
(We stress that here $f$ is not restricted to the interval $[-1,1].$ Possible approximations use noisy observations 
of $f$ with adaptively chosen precisions $\sigma_i.$) 

We clearly have $\widehat{\widehat\$}=\widehat\$,$ and $\widehat\$\le\$$ since the approximation 
$\tilde f=f+e,$ where $e\sim\mathcal N(0,\sigma^2),$ gives error $\sigma$ at cost $\$(\sigma,d).$

\begin{lemma}\label{simplelemma1}
For all $\varepsilon\in(0,1)$ and $d\ge 1$ we have
$$\na_\$(\varepsilon,d)\ge\frac{\nsw\big(2\,\varepsilon,d\big)+1}{4}.$$
Also, there is $c\in(0,1)$ such that for all $\varepsilon\in(0,c)$ and $d\ge 1$ we have
$$\na_\$(\varepsilon,d)\ge\frac12\,\widehat\$\bigg(\frac{\varepsilon}{c\sqrt 2},d\bigg).$$
\end{lemma}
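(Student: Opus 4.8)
The plan rests on one structural remark: since $\$\ge1$, the $\$$-cost of any information is at least the expected number of evaluations it performs, so $\na_\$(\varepsilon,d)$ is bounded below by the least, over all $N,\Phi$ with $\ea(S_d,N,\Phi)\le\varepsilon\|S_d\|$, of $\sup_{\|f\|_{F_d}\le1}\int_Y n(\by)\,\pi_f(\mathrm d\by)$. Normalize $\|S_d\|=1$ throughout.

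\emph{First inequality.} Put $m:=\nsw(2\varepsilon,d)$; if $m=0$ there is nothing to prove since $\na_\$(\varepsilon,d)\ge1$, so assume $m\ge1$, whence $\lambda_{d,m}>4\varepsilon^2$ by \eqref{minn}. Restricting the inputs to the unit ball of $W:=\mathrm{span}\{f_{d,1}^*,\dots,f_{d,m}^*\}$ and writing $f=\sum_{j\le m}g_jf_{d,j}^*$, orthogonality of $\{S_df_{d,j}^*\}$ gives $\|S_df-\Phi(\by)\|_{G_d}^2\ge\lambda_{d,m}\|g-\widehat g\|_{\ell_2^m}^2$ for the natural projection $\widehat g$ of $\Phi(\by)$; hence any admissible pair estimates the vector $g$ in the unit ball of $\mathbb R^m$ (Euclidean metric) with worst case root mean square error $<\tfrac12$. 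It then suffices to show that such estimation needs, in expectation, at least $\tfrac14(m+1)$ Gaussian evaluations of functionals of Euclidean norm $\le1$. I would prove this by a Bayesian argument: under a centred Gaussian prior $\mathcal N(0,\tau^2 I_m)$ on $g$, the conditional law of $g$ given the data $\by$ is again Gaussian with covariance $(\tau^{-2}I_m+M_{\by})^{-1}$, where $M_{\by}\succeq0$ has rank at most $n(\by)$ — this holds even for information of the form \eqref{adaptinfo} with adaptively chosen functionals, precisions, and stopping rule, because once $y_1,\dots,y_{i-1}$ are observed the $i$th functional and precision are fixed numbers. Consequently the Bayes squared error is $\mathbb E_{\by}\operatorname{tr}\big((\tau^{-2}I_m+M_{\by})^{-1}\big)\ge\tau^2\,\mathbb E_{\by}(m-n(\by))_+$, and after passing from the Gaussian prior to its restriction to the ball (harmless for large $m$) and bounding $\mathbb E_{\by}\,n(\by)$ by $\na_\$(\varepsilon,d)$, comparison with the bound $<\tfrac14$ on the worst case squared error yields the claim for a suitable $\tau^2$. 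The main obstacle is quantitative bookkeeping: the Gaussian prior is not supported in the ball, so its truncation — together with a preliminary deterministic truncation of $n(\by)$ needed to keep the argument valid off the ball — degrades the constants, which becomes harmless only once $m$ is large; the few small values of $m$, where $\tfrac14(m+1)$ is a small number, have to be disposed of separately, and it is this interplay that produces the exact constants $\tfrac14$ and $+1$.

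\emph{Second inequality.} Embed the scalar problem along the leading eigendirection: for $a\in[-1,1]$ set $f=af_{d,1}^*$, so $\|f\|_{F_d}\le1$, every admissible functional satisfies $L(f)=aL(f_{d,1}^*)$ with $|L(f_{d,1}^*)|\le1$, and $S_df=a\sqrt{\lambda_{d,1}}\,e_1$ with $e_1:=S_df_{d,1}^*/\sqrt{\lambda_{d,1}}$ a unit vector. Thus an $\varepsilon\|S_d\|$-approximation $\Phi(\by)$ gives, via $\widehat a:=\langle\Phi(\by),e_1\rangle_{G_d}/\sqrt{\lambda_{d,1}}$, an algorithm estimating a real parameter $a\in[-1,1]$ from Gaussian-noisy observations through multipliers of modulus $\le1$, with worst case root mean square error $\le\varepsilon$ and the same expected $\$$-cost; hence $\na_\$(\varepsilon,d)\ge\mathrm{comp}_{[-1,1]}(\varepsilon,d)$, the complexity of that bounded scalar problem with cost function $\$(\cdot,d)$. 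It then remains to bound $\mathrm{comp}_{[-1,1]}(\varepsilon,d)$ from below by $\tfrac12\widehat\$\big(\tfrac{\varepsilon}{c\sqrt2},d\big)$ once $\varepsilon$ is below an absolute threshold $c$ — a statement purely about the one-dimensional Gaussian estimation problem (cf. \cite{MoPl20}), to the effect that estimating a bounded real parameter is, up to a constant factor in cost and a constant rescaling of the error, no easier than estimating an unbounded one. I would prove it by turning any method $M$ for $[-1,1]$ with expected cost $K$ and error $\le\varepsilon$ into a method $M'$ for the whole line with expected cost $\le2K$ and error $\le\tfrac{\varepsilon}{c\sqrt2}$, which by the very definition of $\widehat\$$ gives $\widehat\$\big(\tfrac{\varepsilon}{c\sqrt2},d\big)\le2K$. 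Here $M'$ first performs one localizing evaluation $y_0=a+e_0$, $e_0\sim\mathcal N(0,\rho^2)$, then runs a re-centred and rescaled copy of $M$ to estimate $(a-y_0)/R$, and outputs $y_0$ plus $R$ times that estimate clipped to $[y_0-R,y_0+R]$; on $\{|e_0|\le R\}$ the error in $a$ is $\le R\varepsilon$ and otherwise it is $\le2|e_0|$, so the worst case squared error over $a\in\mathbb R$ is at most $R^2\varepsilon^2+4\rho^2\,\mathbb E\big[\mathbf 1_{\{|v|>R/\rho\}}v^2\big]$ with $v\sim\mathcal N(0,1)$. Choosing $R=\tfrac1{2c}$ and $\rho$ small enough, with $c$ a sufficiently small absolute constant, makes this $\le\big(\tfrac{\varepsilon}{c\sqrt2}\big)^2$. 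The delicate point — the source of the factor $\tfrac12$ and of the restriction $\varepsilon<c$ — is keeping the cost $\$(\rho,d)$ of the localizing evaluation below $K$ uniformly in $d$; this is forced through by taking $\rho$ no larger than a precision a near-optimal $M$ already uses (so $\$(\rho,d)\le K$), which is possible because reaching error $\varepsilon$ over $[-1,1]$ with small $\varepsilon$ compels $M$ to expend a precision budget comparable to $\varepsilon^{-2}$, and by capping the cost of $M$ so that the rescaled copy is never run on a costly input.
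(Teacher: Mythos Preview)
Your routes to both parts differ from the paper's, and in each case the paper's argument is shorter because it invokes an existing result rather than reproving it.

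\textbf{First inequality.} The paper observes that a deterministic algorithm with Gaussian-noisy information is, once the noise is regarded as a random parameter, a \emph{randomized} algorithm using \emph{exact} information, whose expected cardinality is at most $n:=\sup_{\|f\|\le1}\int_Y n(\mathbf y)\,\pi_f(\mathrm d\mathbf y)\le\ca_\$(N)$ (since $\$\ge1$). It then cites \cite[Theorem~4.42]{NoWo08}, which produces a deterministic algorithm with exact information of cardinality at most $4n-1$ and worst case error at most $2\varepsilon$; the inequality $\nsw(2\varepsilon,d)\le 4n-1$ follows at once, with the stated constants. Your Bayesian plan via a Gaussian prior on $g\in\mathbb R^m$ and the rank bound $\mathrm{rank}\,M_{\mathbf y}\le n(\mathbf y)$ is in effect an attempt to reprove such a randomization--versus--determinism estimate from scratch. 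The obstacle you flag is genuine: the Gaussian prior puts mass outside the unit ball, so its Bayes risk does not directly lower bound the worst case risk over the ball, and the usual remedy (take $\tau^2$ of order $1/m$ and control the excess mass) does not obviously yield the precise constants $\tfrac14$ and $+1$. The idea is sound but the proof is not finished; the paper simply imports the finished theorem.

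\textbf{Second inequality.} Both proofs restrict to the line through $f_{d,1}^*$ and hence to a scalar problem on $[-1,1]$; after that they diverge. The paper argues by a \emph{lower bound}: it replaces the supremum over $a\in[-1,1]$ by the average under the two-point prior at $\pm1$, invokes \cite[Lemma~3]{MoPl20} to conclude that the average error is at least $c\min(\sigma,1)$ where $\sigma^{-2}=\int_Y\sigma_{\mathbf y}^{-2}\,\mu_1(\mathrm d\mathbf y)$ and $\sigma_{\mathbf y}^{-2}=\sum_i\sigma_i^{-2}$, uses the defining property $\sum_i\$(\sigma_i,d)\ge\widehat\$(\sigma_{\mathbf y},d)$, and finishes with a Markov-type averaging over $\mathbf y$. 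You instead go in the \emph{opposite} direction, converting a method $M$ for $[-1,1]$ into a method $M'$ for the whole line by prepending a localizing observation. The point you call ``delicate'' is a real gap and is not closed by what you wrote: the claim that $\$(\rho,d)\le K$ because ``a near-optimal $M$ already uses'' some precision $\le\rho$ fails, for instance, when $\$(\cdot,d)$ is finite only for $\sigma\ge\sigma_0$, so that $M$ reaches error $\varepsilon$ by averaging many observations of fixed precision $\sigma_0$ and never uses any precision below $\sigma_0$; then $\rho\ge\sigma_0$ is forced, and a single localizing observation with fixed $\rho$ cannot make the tail term $4\rho^2\,\mathbb E\big[v^2\mathbf 1_{\{|v|>R/\rho\}}\big]$ of order $\varepsilon^2$ as $\varepsilon\to0$. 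A repair (replace the single localizing observation by an aggregate whose combined precision is of order $\varepsilon^{-1}$, at cost comparable to $K$) is plausible but needs its own argument; the paper's prior-based route avoids the issue entirely.
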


\begin{proof}
Let an algorithm $\Phi$ using information $N=\{\pi_f\}_{f\in F}$ be such that 
$\ea(S_d,N,\Phi)\le\varepsilon\sqrt{\lambda_1}.$~Let 
$$n=\sup_{\|f\|_{F_d}\le 1}\int_Y n(\mathbf y)\,\pi_f(\mathrm d\mathbf y).$$
Since $\$\ge 1,$ we have $n\le\ca_\$(N).$
Observe that any deterministic algorithm that uses noisy information can be interpreted as 
a randomized algorithm that uses exact information, where the noise is treated as a random parameter.
Then, by \cite[Theorem 4.42]{NoWo08}, there is deterministic algorithm using exact information 
of cardinality $4n-1$ whose worst case error is at most $2\varepsilon.$ Hence
$$\ca_\$(N,\Phi)\ge n\ge\frac{\nsw\big(2\varepsilon,d\big)+1}{4}.$$
Taking the infimum with respect to all $\Phi$ and $N$ we obtain the desired inequality. 

\smallskip
To show the second inequality, we estimate the complexity of our problem from below by the complexity 
of the same problem, but with error taken over the interval $[-f_{d,1}^*,f_{d,1}^*]$ where, 
as before, $f_{d,1}^*$ is the eigenelement corresponding to the largest eigenvalue of $S_d^*S_d.$
This is equivalent to the one-dimensional problem of approximating a prameter $f\in[-1,1]$ from its noisy 
observations that is analyzed in Appendix of \cite{MoPl20}. The worst case error of the latter can be 
lower bounded by the average error with respect to the two-point probability measure $\mu$ that 
assigns $1/2$ to $\pm1.$ For any adaptive information such average error is not larger than 
$c\min(\sigma,1)$ for some $c>0,$ where $\sigma$ is such that
$$\sigma^{-2}=\int_{-1}^1\int_Y\sigma_{\mathbf y}^{-2}\,\pi_f(\mathrm d\mathbf y)\mu(\mathrm df)
=\int_Y\sigma_{\mathbf y}^{-2}\mu_1(\mathrm d\mathbf y),
\qquad\sigma_{\mathbf y}^{-2}=\sum_{i=1}^{n(\mathbf y)}\sigma_i^{-2}\big(y_1,\ldots,y_{i-1}\big),$$ 
and $\mu_1$ is the a priori distribution of information $\mathbf y$ on $Y,$ cf. \cite[Lemma 3]{MoPl20}. 
Another important property is that for any $\sigma_1,\ldots,\sigma_n$ and $\sigma$ such that 
$\sigma^{-2}=\sum_{i=1}^n\sigma_i^{-2}$ we have 
$$\sum_{i=1}^n\$(\sigma_i,d)\ge\widehat\$(\sigma,d),$$
which follows directly from the definition of $\widehat\$.$ 
 
Let $A\subset Y$ be the set of all $\mathbf y$ such that $\sigma_{\mathbf y}^{-2}\le 2\sigma^{-2}.$ 
Then $\mu_1(A)\ge 1/2.$ Hence, if the error is at most $\varepsilon<c$ then $\sigma\le\varepsilon/c$ and 
the cost is at least
$$\int_A\sum_{i=1}^{n(\mathbf y)}\$\big(\sigma_i(y_1,\ldots,y_{i-1}),d\,\big)\,\mu_1(\mathrm d\mathbf y)
\ge\int_A\sum_{i=1}^{n(\mathbf y)}\widehat\$(\sigma_{\mathbf y},d)\,\mu_1(\mathrm d\mathbf y)
\ge\frac12\,\widehat\$\bigg(\frac{\varepsilon}{c\sqrt 2},d\bigg),$$
as claimed.
\end{proof}

We now show some useful properties of $\widehat\$.$ 
For $d\ge 1,$ define the two functions, $h_1$ and $h_{2,\lambda}.$ 
$$(0,+\infty)\ni x\mapsto h_1(x,d)=\$\bigg(\sqrt{\frac{1}{x}},\,d\bigg),\qquad\mbox{and}$$ 
$$(0,\lambda)\ni x\mapsto h_{2,\lambda}(x,d)=\$\bigg(\sqrt{\frac{\lambda x}{\lambda-x}},\,d\bigg).$$

\begin{lemma}\label{auxcost}
For any $d\ge 1$ we have the following.
\begin{itemize}
\item[(i)] Suppose that $h_1(\,\cdot\,,d)$ is concave, and $h_{2,\lambda}(\,\cdot\,,d)$ is convex
for all $\lambda$ sufficiently large. Then 
$$\widehat\$(\,\cdot\,,d)=\$(\,\cdot\,,d).$$
\item[(ii)] Suppose there is a line $\ell(x)=\alpha x$ supporting $h_1(\,\cdot\,,d)$ at some $x_0>0,$ i.e., 
$\ell(x_0)=h_1(x_0,d)$ and $\ell(x)\le h(x,d)$ for all $x\ge 0.$ Then for all $\sigma>0$ we have 
$$\frac\alpha{\sigma^2}\le
\widehat\$(\sigma,d)\le\bigg\lceil\frac{\sigma_0^2}{\sigma^2}\bigg\rceil\,\frac\alpha{\sigma_0^2},
\qquad\mbox{where}\quad\sigma_0^2=1/x_0.$$
\item[(iii)] We always have
$$\widehat\$(\sigma,d)\le\bigg\lceil\frac{\sigma_0^2}{\sigma^2}\bigg\rceil\,\$(\sigma_0,d),
\qquad\mbox{for any}\quad\sigma_0>0.$$
\end{itemize}
\end{lemma}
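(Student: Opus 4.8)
The plan is to handle the three parts separately and in order (iii), (ii), (i). I will use throughout the standard description of the one-dimensional Gaussian problem: a non-adaptive scheme with precisions $\sigma_1,\dots,\sigma_k$ estimates $f\in\mathbb R$ with worst case error $\bigl(\sum_{i=1}^k\sigma_i^{-2}\bigr)^{-1/2}$ (the precision-weighted mean being an optimal estimator), at cost $\sum_{i=1}^k\$(\sigma_i,d)=\sum_{i=1}^k h_1(\sigma_i^{-2},d)$; since $\$(\cdot,d)$ is non-increasing, the restriction of $\widehat\$(\sigma,d)$ to non-adaptive schemes equals $\inf\bigl\{\sum_i h_1(x_i,d):x_i>0,\ \sum_i x_i\ge\sigma^{-2}\bigr\}$. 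I also record the elementary monotonicities that $h_1(\cdot,d)$ is non-decreasing and $h_{2,\lambda}(\cdot,d)$ is non-increasing (both follow from $\$$ being non-decreasing in $\sigma^{-1}$ and from $x\mapsto\lambda x/(\lambda-x)$ being increasing on $(0,\lambda)$), together with the identity $h_{2,\lambda}(x,d)=h_1\bigl(\tfrac1x-\tfrac1\lambda,\,d\bigr)$, which exhibits $h_{2,\lambda}$ as a ``top-up cost'': if the data already seen carry aggregate precision $1/\lambda$ and one further observation is used to bring the posterior variance down to $x$, that observation has precision $\sqrt{\lambda x/(\lambda-x)}$ and hence costs $h_{2,\lambda}(x,d)$.

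Part (iii) is a one-line construction: take $n=\lceil\sigma_0^2/\sigma^2\rceil$ independent observations of precision $\sigma_0$ and estimate $f$ by their arithmetic mean. The error is $\sigma_0/\sqrt n\le\sigma$ and the cost is $n\,\$(\sigma_0,d)$, which is the asserted bound.

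For part (ii), the upper bound is (iii) applied with $\sigma_0=1/\sqrt{x_0}$, using that the support condition gives $\$(\sigma_0,d)=h_1(x_0,d)=\ell(x_0)=\alpha x_0=\alpha/\sigma_0^2$. For the lower bound I must show that every (adaptive) information $N$ for the one-dimensional problem with worst case error at most $\sigma$ has $\ca_\$(N)\ge\alpha/\sigma^2$. On each realisation $\by$ the cost contribution is $\sum_i\$\bigl(\sigma_i(\by_{<i}),d\bigr)=\sum_i h_1\bigl(\sigma_i(\by_{<i})^{-2},d\bigr)\ge\alpha\sum_i\sigma_i(\by_{<i})^{-2}=:\alpha\,\sigma_\by^{-2}$ by the support line, so $\ca_\$(N)\ge\alpha\,\mathbb E_{\mu_1}[\sigma_\by^{-2}]$ for the marginal $\mu_1$ of $\by$ under any prior $\mu$ on $f$. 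Taking $\mu=\mathcal N(0,\tau^2)$ and noting that, for fixed $\by$, the likelihood in $f$ is Gaussian with precision $\sigma_\by^{-2}$ (the adaptively chosen precisions enter the density only through $f$-free factors), the Bayes risk is $\mathbb E_{\mu_1}\bigl[(\tau^{-2}+\sigma_\by^{-2})^{-1}\bigr]\ge(\tau^{-2}+\mathbb E_{\mu_1}[\sigma_\by^{-2}])^{-1}$ by convexity of $t\mapsto(\tau^{-2}+t)^{-1}$; since the Bayes risk is at most the squared worst case error, $\mathbb E_{\mu_1}[\sigma_\by^{-2}]\ge\sigma^{-2}-\tau^{-2}$, and letting $\tau\to\infty$ gives $\ca_\$(N)\ge\alpha/\sigma^2$. (This is essentially the one-dimensional lower bound already used, via \cite{MoPl20}, in Lemma \ref{simplelemma1}.)

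For part (i), since $\widehat\$\le\$$ always, it remains to prove $\widehat\$(\sigma,d)\ge\$(\sigma,d)$, i.e.\ that neither splitting an observation into several nor choosing precisions adaptively (equivalently, at random) can lower the cost below $h_1(\sigma^{-2},d)$. Splitting is controlled by concavity of $h_1(\cdot,d)$: a concave function on $(0,\infty)$ bounded below --- here by $1$, so that its continuous extension to $0$ is positive --- is subadditive, whence $\sum_i h_1(x_i,d)\ge h_1\bigl(\sum_i x_i,d\bigr)\ge h_1(\sigma^{-2},d)$ and a single observation of precision $\sigma$ is optimal among non-adaptive schemes. Adaptivity is controlled by convexity of the functions $h_{2,\lambda}(\cdot,d)$: if, along a branch on which the preceding observations have accumulated precision $1/\lambda$, the last observation is used to make the posterior variance equal to $\psi(\by_{<k})$, then its cost is $h_{2,\lambda}(\psi(\by_{<k}),d)$ while its effect on the overall error is felt only through $\mathbb E[\psi(\by_{<k})]$, so convexity yields $\mathbb E\bigl[h_{2,\lambda}(\psi,d)\bigr]\ge h_{2,\lambda}(\mathbb E[\psi],d)$ and one may replace the random target $\psi$ by its mean, de-adapting that step. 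The main obstacle is exactly this de-adaptation: because the precisions of an adaptive scheme are data-dependent, its mean squared error is not simply $\mathbb E[\sigma_\by^2]$, so the bookkeeping cannot be reduced to a single application of Jensen's inequality to $h_1$ (which would force $h_1$ to be simultaneously subadditive and convex, i.e.\ affine) --- one really needs the step-by-step reduction through the family $\{h_{2,\lambda}\}$, for which I would invoke the analysis of the one-dimensional Gaussian problem in \cite{NICC96} (cf.\ also \cite{MoPl20,Pla96a}); after the scheme has been fully de-adapted, subadditivity of $h_1$ collapses it to a single deterministic observation of precision $\sigma$, giving $\widehat\$(\sigma,d)\ge\$(\sigma,d)$.
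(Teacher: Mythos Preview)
Your argument follows essentially the same route as the paper. Part (iii) and the upper bound in (ii) are identical to the paper's; for the lower bound in (ii) the paper simply reduces to part (i) applied to the auxiliary cost $\$_1(\sigma,d)=\alpha\sigma^{-2}$ (for which $h_1$ is linear and $h_{2,\lambda}$ convex, so $\widehat\$_1=\$_1$), whereas you argue directly via a Gaussian prior of variance $\tau^2\to\infty$ --- both are valid and amount to the same computation.

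For (i), the paper likewise passes to the average case with a Gaussian prior $\mu_\lambda$ of variance $\lambda$, shows by subadditivity of $h_1$ that a single observation is optimal among nonadaptive schemes (minimal cost $h_{2,\lambda}(\sigma^2,d)$), and then invokes \cite[Lemma~3.9.2]{NICC96} together with convexity of $h_{2,\lambda}(\cdot,d)$ \emph{for that one fixed large $\lambda$} to rule out gains from adaption; letting $\lambda\to\infty$ gives $\widehat\$\ge\$$. Your invocation of \cite{NICC96} lands in the same place, but note that your step-by-step de-adaptation heuristic reads $\lambda$ as the running posterior variance along a branch, which can be arbitrarily small --- that version would need convexity of $h_{2,\lambda}$ for \emph{all} $\lambda>0$, stronger than the hypothesis ``for all $\lambda$ sufficiently large''. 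The correct reading (and the one \cite{NICC96} supports) is the paper's: $\lambda$ is the fixed prior variance, chosen large, and the de-adaptation lemma is applied for that single $\lambda$.
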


\begin{proof}
(i) We already noticed that $\widehat\$\le\$.$ To bound $\widehat\$$ from below we use 
the average case complexity of approximating $f\in\mathbb R$ from observations of $f$ with Gaussian noise, 
where the average squared error and average cost are taken with respect to the one-dimensional Gaussian 
distribution $\mu_\lambda$ with mean zero and variance $\lambda.$  

Consider first nonadaptive information consisting of $n$ observations $y_i=f+e_i$ with precisions $\sigma_i.$ 
Then the optimal algorithm is 
$$\phi^*(\mathbf  y)=\frac{\sigma^2\lambda}{\sigma^2+\lambda}\sum_{i=1}^n\frac{y_i}{\sigma_i^2}\,,
\quad\mbox{where}\quad\sigma^{-2}=\sum_{i=1}^n\sigma_i^{-2},$$
and its average squared error depends only on $\sigma^2$ and $\lambda$ and equals 
$$\int_F\int_{\mathbb R^n}|f-\phi^*(\mathbf y)|^2\pi_f(\mathrm d\mathbf y)\mu_\lambda(\mathrm df)=
\frac{\sigma^2\lambda}{\sigma^2+\lambda}\,,$$ 
cf. \cite[Sect. 3.5]{NICC96}. By concavity of $h_1$ we have 
\begin{eqnarray*}
h_1(\sigma^{-2},d)&=&h_1\bigg(\sum_{i=1}^n\sigma_i^{-2},d\bigg)
\le h_1(0,d)+h_1\bigg(\sum_{i=1}^n\sigma_i^{-2},d\bigg)
\le h_1\bigg(\sum_{i=1}^{n-1}\sigma_i^{-2},d\bigg)+h_1(\sigma_n^{-2},d)\\
&\le& h_1\bigg(\sum_{i=1}^{n-2}\sigma_i^{-2},d\bigg)+h_1(\sigma_{n-1}^{-2},d)+h_1(\sigma_n^{-2},d)
\le\cdots\le\sum_{i=1}^n h_1(\sigma_i^{-2},d),
\end{eqnarray*}
so that $\$(\sigma,d)\le\sum_{i=1}^n\$(\sigma_i,d).$ This means that the cheapest way of obtaining 
an approximation with the average squared error $\sigma^2<\lambda$ using nonadaptive information 
is to use just one observation $y=f+e$ with variance 
$\sigma_1^2=\sigma^2\lambda/(\lambda-\sigma^2),$ for which the cost is
$$\psi_\lambda(\sigma)=\$\bigg(\sqrt{\frac{\sigma^2\lambda}{\lambda-\sigma^2}},\,d\bigg).$$
Since, by assumption, the function $\sigma\mapsto\psi_\lambda(\sqrt\sigma\,)$ is convex for large $\lambda,$ 
we can use \cite[Lemma~3.9.2]{NICC96} to claim, that the cost $\psi_\lambda(\sigma)$ cannot be reduced 
using adaptive information. Hence $\widehat\$(\sigma)\ge\psi_\lambda(\sigma),$ and letting 
$\lambda\to\infty$ we obtain $\widehat\$(\sigma)\ge\$(\sigma),$ which forces $\widehat\$(\sigma)=\$(\sigma).$

\smallskip\noindent
(ii) If the cost function is $\$_1(\sigma,d)=\alpha\sigma^{-2}$ then we have from (i) that $\widehat\$_1=\$_1.$
(Note that here we violate the assumption that the cost is at least $1.$) Since 
$\$(\,\cdot\,,d)\ge\$_1(\,\cdot\,,d)$ then $\widehat\$(\sigma,d)\ge\widehat\$_1(\sigma,d)=\alpha\sigma^{-2}.$
On the other hand, we can approximate $f\in\mathbb R$ with error $\sigma$ using $n$ nonadaptive observations 
with the same precision $\sigma_0/\sqrt{n},$ where $n=\lceil\sigma_0^2/\sigma^2\rceil.$ Hence,
$$\widehat\$(\sigma,d)\le n\,\$(\sigma_0,d)=
\bigg\lceil\frac{\sigma_0^2}{\sigma^2}\bigg\rceil\frac\alpha{\sigma_0^2}.$$

\smallskip\noindent
(iii) The bound can be easily obtained by repetitive observations of variance $\sigma_0^2,$ as in (ii).
\end{proof}

\begin{example}\label{exam}
Assume that the cost function grows polynomially,  
$$\$(\sigma,d)=1+Dd^t\sigma^{-2s}.$$
For $s\le 1$ the function $h_1(\,\cdot\,,d)$ is obviously concave, and 
$$x\mapsto h_{2,\lambda}(x,d)=1+Dd^t\bigg(\frac1x-\frac1\lambda\bigg)^s$$ 
is convex for all $\lambda>0,$ and therefore $\widehat\$=\$.$ 

For $s\ge 1$ the function $\$(\,\cdot\,,d)$ is supported at $x_0=\big((s-1)Dd^t\big)^{-1/s}$ 
by $\ell(x)=\alpha_d x,$ where $$\alpha_d=s(s-1)^{1/s-1}(Dd^t)^{1/s}.$$ 
Hence $\widehat\$(\sigma,d)$ essentially equals $\alpha_d\sigma^{-2}.$
\end{example}

\begin{remark}
Lemma \ref{simplelemma1} is an analogue of Lemma \ref{simplelemma}. In the case of bounded noise 
the corresponding auxiliary cost function would always be $\widehat\$=\$.$ 
\end{remark}

Similarly to the case of bounded noise, for upper bounds on tractability we use 
\begin{equation}\label{approa}
\Phi_n^d(\mathbf y)=\sum_{i=1}^ny_iS_d(f_{d,i}^*),
\end{equation}
where $y_i$ approximates $\langle f,f_{d,i}^*\rangle_{F_d}$ for all $f\in F$ with 
the expected squared error $\sigma_i^2,$ and with cost $\widehat\$(\sigma_i,d).$
Then, for the corresponding information we have
\begin{equation}\label{error2}  \ea(S_d,N^d_n,\Phi^d_n)=
      \sqrt{\sum_{i=1}^n\sigma_i^2\lambda_{d,i}+\lambda_{d,n+1}}\,,
\end{equation}
and $\ca_\$(N_n^d)=\sum_{i=1}^n\widehat\$(\sigma_i,d).$ 
Note that $\ea(S_d,N_n^d,\Phi_n^d)=\ew(S_d,N_n^d,\Phi_n^d),$ where in the deterministic case the noise 
of $i$th observation is bounded by $\sigma_i,$ cf. \eqref{error1}. Hence we can adopt the proof technique
from Section~\ref{sect:bounded} with the cost function $\widehat\$$ to obtain complexity bounds in 
the case of random noise. In particular, Lemma \ref{auxcost}(iii) gives the following general upper bound.

\begin{corollary}\label{corol} 
Suppose that $\nsw(\varepsilon,d)\le Cd^q\varepsilon^{-p}.$ Then for any fixed $\sigma_0>0$ we have
$$\na_\$(\varepsilon,d)\,\preccurlyeq\,\sigma_0^2\,\$(\sigma_0,d)\,d^{\,2q}
\left\{\begin{array}{rl} 
\varepsilon^{-2p}, &\quad p>1,\\ 
\ln^2(1/\varepsilon)\,\varepsilon^{-2},&\quad p=1,\\ 
\varepsilon^{-2}, &\quad p<1.\end{array}\right. $$
\end{corollary}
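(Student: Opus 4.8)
The plan is to reduce Corollary~\ref{corol} to the upper bound machinery already developed for bounded noise, using the identity $\ea(S_d,N_n^d,\Phi_n^d)=\ew(S_d,N_n^d,\Phi_n^d)$ noted after \eqref{error2}, but with the cost of each observation replaced by $\widehat\$(\sigma_i,d)$. Concretely, I would fix $\sigma_0>0$, set $n=\max\bigl(2,\nsw(\varepsilon/\sqrt 2,d)\bigr)\le 1+C d^q(\sqrt 2/\varepsilon)^p$, and choose the precisions $\sigma_i$ exactly as in the proof of Theorem~\ref{thm:wwpoly}(iii). The only change is that in the cost accounting we use Lemma~\ref{auxcost}(iii), which gives $\widehat\$(\sigma_i,d)\le\lceil\sigma_0^2/\sigma_i^2\rceil\,\$(\sigma_0,d)\le(1+\sigma_0^2\sigma_i^{-2})\,\$(\sigma_0,d)$. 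Thus, in place of \eqref{compform}, the total cost $\ca_\$(N_n^d)=\sum_{i=1}^n\widehat\$(\sigma_i,d)$ is bounded by $\$(\sigma_0,d)\bigl(n+\sigma_0^2\sum_{i=1}^n\sigma_i^{-2}\bigr)$.

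The next step is to bound $\sum_{i=1}^n\sigma_i^{-2}$. This plays the role that $\sum_i\sigma_i^{-2s}$ played before, i.e.\ it is the $s=1$ instance of the optimization carried out in the proof of Theorem~\ref{thm:wwpoly}(iii). Reusing that computation with $s=1$, the optimal choice of $\sigma_i$ subject to $\ew(S_d,N_n^d,\Phi_n^d)^2\le\varepsilon^2\lambda_{d,1}$ yields
$$\sum_{i=1}^n\hat\sigma_i^{-2}=\left(\sum_{i=1}^n\Bigl(\tfrac{\lambda_{d,i}}{\lambda_{d,1}}\Bigr)^{1/2}\right)^{2}\Bigl(\varepsilon^2-\tfrac{\lambda_{d,n+1}}{\lambda_{d,1}}\Bigr)^{-1}\le 2\left(\sum_{i=1}^n\Bigl(\tfrac{\lambda_{d,i}}{\lambda_{d,1}}\Bigr)^{1/2}\right)^{2}\varepsilon^{-2},$$
using $\lambda_{d,n+1}/\lambda_{d,1}\le\varepsilon^2/2$ from the choice of $n$. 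Then I would invoke the eigenvalue decay estimate \eqref{lambdasb}, $\lambda_{d,j}/\lambda_{d,1}\le\min\bigl(1,(Cd^q/(j-1))^{2/p}\bigr)$, so that $\sum_{i=1}^n(\lambda_{d,i}/\lambda_{d,1})^{1/2}\le \lfloor Cd^q\rfloor+1+\sum_{j=\lfloor Cd^q\rfloor+2}^{n}(Cd^q/(j-1))^{1/p}$; estimating the sum by the integral $\int x^{-1/p}\,dx$ exactly as at the end of the proof of Theorem~\ref{thm:wwpoly} gives, since $n\preccurlyeq d^q\varepsilon^{-p}$,
$$\sum_{i=1}^n\Bigl(\tfrac{\lambda_{d,i}}{\lambda_{d,1}}\Bigr)^{1/2}\preccurlyeq d^{q}\left\{\begin{array}{ll}1,&\;1/p>1\ (p<1),\\ \ln(1/\varepsilon),&\;1/p=1\ (p=1),\\ \varepsilon^{1-p},&\;1/p<1\ (p>1).\end{array}\right.$$
(For $p=0$ one has $\lambda_{d,j}=0$ for $j\ge\lfloor Cd^q\rfloor+1$, so the sum is $\preccurlyeq d^q$ and the bound is subsumed in the $p<1$ case.) Squaring and inserting into $\ca_\$(N_n^d)\le\$(\sigma_0,d)\bigl(n+\sigma_0^2\sum_i\sigma_i^{-2}\bigr)$, the $\sigma_0^2\sum_i\sigma_i^{-2}\preccurlyeq\sigma_0^2 d^{2q}\varepsilon^{-2}\cdot\{\ldots\}^2$ term dominates $n$ (absorbing $n\preccurlyeq d^q\varepsilon^{-p}$ into it, noting $\sigma_0^2\$(\sigma_0,d)\succcurlyeq 1$), which produces precisely the claimed three-case bound $\na_\$(\varepsilon,d)\preccurlyeq\sigma_0^2\,\$(\sigma_0,d)\,d^{2q}\cdot\{\varepsilon^{-2p}\text{ if }p>1;\ \ln^2(1/\varepsilon)\varepsilon^{-2}\text{ if }p=1;\ \varepsilon^{-2}\text{ if }p<1\}$. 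Finally I would remark that $\na_\$(\varepsilon,d)\le\ca_\$(N_n^d)$ by definition of the complexity, completing the proof.

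The one point requiring a little care — and the only genuine obstacle — is the verification that the precisions $\hat\sigma_i$ chosen by this Lagrange optimization actually satisfy $\hat\sigma_i<1$, so that they lie in the regime where the polynomial cost hypothesis and Lemma~\ref{auxcost}(iii) are being applied meaningfully; but this is exactly the computation $\hat\sigma_k^{-2}\ge\hat\sigma_n^{-2}\ge n(\lambda_{d,n}/\lambda_{d,1})\varepsilon^{-2}>n/2\ge 1$ already performed verbatim in the proof of Theorem~\ref{thm:wwpoly}(iii) with $s=1$, so no new work is needed. Everything else is a mechanical specialization of that proof to $s=1$ together with the substitution $\$\leadsto\widehat\$$ justified by Lemma~\ref{auxcost}(iii).
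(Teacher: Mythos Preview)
Your proposal is correct and follows essentially the same approach the paper has in mind: the corollary is stated immediately after the observation that one can ``adopt the proof technique from Section~\ref{sec:determ} with the cost function $\widehat\$$,'' and Lemma~\ref{auxcost}(iii) is singled out as the relevant input, which is exactly what you do --- replace $\$$ by $\widehat\$$, bound $\widehat\$(\sigma_i,d)\le(1+\sigma_0^2\sigma_i^{-2})\$(\sigma_0,d)$, and rerun the optimization of Theorem~\ref{thm:wwpoly}(iii) with $s=1$. The three cases $p\gtrless 1$ then drop out of the integral estimate for $\sum_j j^{-1/p}$ precisely as you indicate.
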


\subsection{Polynomial tractability}

\begin{theorem}\label{thm:wapoly}
Consider a multivariate problem $\mathcal S=\{S_d\}_{d\ge 1}.$ 
\begin{itemize}
\item[(i)] The problem with noisy information is polynomially tractable if and only if 
\begin{itemize}
\item[$\bullet$] it is polynomially tractable for exact information, and
\item[$\bullet$] the auxiliary cost function grows polynomially in $d$ for some $\sigma_0>0.$
\end{itemize} \smallskip
\item[(ii)] The problem with noisy information is strongly polynomially tractable if and only if
\begin{itemize}
\item[$\bullet$] it is strongly polynomially tractable for exact information, and
\item[$\bullet$] the auxiliary cost function is bounded in $d$ for some $\sigma_0>0.$ 
\end{itemize} \smallskip
\item[(iii)]
Suppose that $\nsw(\varepsilon,d)\le Cd^q\varepsilon^{-p}$ and 
$\$(\sigma,d)\le 1+Dd^t\sigma^{-2s}.$ \\ 
 If $p=0$ and $s=0$ then 
$\nw_\$(\varepsilon,d)\preccurlyeq d^{t+q};$ otherwise
$$  \nw_\$(\varepsilon,d)\,\preccurlyeq\, d^{\overline t+q(\overline s+1)}
             \left\{\begin{array}{rl} 
               \varepsilon^{-p(\overline s+1)}, &\; p(\overline s+1)>2\overline s, \\
               \ln^{\overline s+1}(1/\varepsilon)\,\varepsilon^{-2\overline s}, 
               &\; p(\overline s+1)=2\overline s,\\
               \varepsilon^{-2\overline s}, &\; p(\overline s+1)<2s,
           \end{array}\right.
$$
where $\overline t=\min(t,t/s)$ and $\overline s=\min(s,1).$
\end{itemize}
\end{theorem}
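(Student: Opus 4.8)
The proof rests on two facts already in place. First, the upper-bound part of the proof of Theorem~\ref{thm:wwpoly}(iii) uses only the eigenfunction information $N_n^d$ and the associated algorithm $\Phi_n^d$ of \eqref{approa}, and by \eqref{error1} and \eqref{error2} this pair has \emph{exactly} the same error in the Gaussian setting as in the bounded-noise one, $\ea(S_d,N_n^d,\Phi_n^d)=\ew(S_d,N_n^d,\Phi_n^d)$, while its Gaussian cost is $\ca_\$(N_n^d)=\sum_{i=1}^n\widehat\$(\sigma_i,d)$ rather than $\sum_{i=1}^n\$(\sigma_i,d)$. Second, Example~\ref{exam} evaluates $\widehat\$$ for a polynomially growing cost function. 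So for (iii) the plan is: by Example~\ref{exam}, if $\$(\sigma,d)\le 1+Dd^t\sigma^{-2s}$ then $\widehat\$=\$$ when $s\le1$ (effective parameters $(\overline s,\overline t)=(s,t)$), while for $s>1$ Lemma~\ref{auxcost}(ii)--(iii) yields $\widehat\$(\sigma,d)\le 1+\overline D\,d^{t/s}\sigma^{-2}$ for a constant $\overline D$ (the supporting-line estimate on the range $\sigma\le\sigma_0(d)=((s-1)Dd^t)^{1/(2s)}$, with the bound from Lemma~\ref{auxcost}(iii) absorbing the finitely many $d$ for which $\sigma_0(d)<1$). In both cases $\overline s=\min(s,1)$ and $\overline t=\min(t,t/s)$, so $\widehat\$$ is a cost function of the very type treated in Theorem~\ref{thm:wwpoly}(iii) but with $(D,t,s)$ replaced by $(\overline D,\overline t,\overline s)$; running the identical $N_n^d$/$\Phi_n^d$ construction with $\widehat\$$ in the role of $\$$ therefore gives $\na_\$(\varepsilon,d)$ exactly the bound of Theorem~\ref{thm:wwpoly}(iii) with that substitution, which is the claimed formula. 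The degenerate $s=0$ case ($\$\le1+Dd^t$, so exact evaluations cost $\preccurlyeq d^t$) gives $\na_\$(\varepsilon,d)\preccurlyeq d^{t+q}\varepsilon^{-p}$, and $\preccurlyeq d^{t+q}$ when also $p=0$.

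For the necessity halves of (i) and (ii) I would invoke Lemma~\ref{simplelemma1}. If $\na_\$(\varepsilon,d)\le Cd^q\varepsilon^{-p}$, its first inequality gives $\nsw(2\varepsilon,d)\le 4Cd^q\varepsilon^{-p}$, hence $\nsw(\varepsilon,d)\le 4C\,2^pd^q\varepsilon^{-p}$ --- polynomial tractability for exact information, strong when $q=0$. Its second inequality gives, for $\varepsilon<c$, $\widehat\$(\varepsilon/(c\sqrt2),d)\le2\na_\$(\varepsilon,d)\le2Cd^q\varepsilon^{-p}$; fixing any $\sigma_0\in(0,1/\sqrt2)$ and taking $\varepsilon=c\sqrt2\,\sigma_0<c$ yields $\widehat\$(\sigma_0,d)\preccurlyeq d^q$, i.e.\ polynomial growth of $\widehat\$$ in $d$ at this $\sigma_0$ (boundedness in $d$ when $q=0$).

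For the sufficiency halves, assume $\nsw(\varepsilon,d)\le Cd^q\varepsilon^{-p}$ (with $q=0$ for (ii)) and $\widehat\$(\sigma_0,d)\le\kappa(d)$ at a fixed $\sigma_0$, $\kappa$ polynomial (bounded, for (ii)). Averaging $\lceil\sigma_0^2/\sigma^2\rceil$ independent copies of the optimal $\sigma_0$-approximation shows $\widehat\$(\sigma,d)\le\lceil\sigma_0^2/\sigma^2\rceil\widehat\$(\sigma_0,d)\preccurlyeq\sigma^{-2}\kappa(d)$, so $\widehat\$$ grows polynomially in $\sigma^{-1}$ and $d$; running the argument behind Corollary~\ref{corol} with $\widehat\$(\sigma_0,\cdot)$ in place of $\$(\sigma_0,\cdot)$ then gives $\na_\$(\varepsilon,d)\preccurlyeq\sigma_0^2\,\kappa(d)\,d^{2q}\,\varepsilon^{-\max(2p,2)}$ (with an extra $\ln^2(1/\varepsilon)$ factor when $p=1$), which is polynomial, and strongly polynomial when $q=0$ and $\kappa$ is bounded. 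The same averaging bound shows that polynomial growth (resp.\ boundedness) in $d$ at one $\sigma_0$ forces it at every $\sigma_0$, so the hypotheses of (i)--(ii) are unambiguous.

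The real work is confined to the evaluation of $\widehat\$$ for the polynomial cost function, i.e.\ Example~\ref{exam}, which rests on the convexity/concavity analysis of Lemma~\ref{auxcost}(i)--(ii). The delicate regime is $s>1$, where $\widehat\$$ is not computed exactly but only sandwiched between $\alpha_d\sigma^{-2}$ and $\lceil\sigma_0(d)^2/\sigma^2\rceil\alpha_d/\sigma_0(d)^2$ with $\alpha_d$ of order $d^{t/s}$; one must check that the ceiling inflates the upper bound by at most a bounded factor over the range of $\sigma$ relevant for tractability (with $\widehat\$\le\$$ and Lemma~\ref{auxcost}(iii) covering $\sigma$ near $1$ and small $d$), so that the clean exponents $\overline s=1$, $\overline t=t/s$ genuinely survive. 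Everything else --- matching the three sub-cases, the bookkeeping with the $\lfloor Cd^q\rfloor$-type terms, and the degenerate $s=0$ and $p=0$ cases --- is the same routine computation as in Theorem~\ref{thm:wwpoly}, now carried out with $(\$,s,t)$ replaced by $(\widehat\$,\overline s,\overline t)$.
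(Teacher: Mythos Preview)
Your proposal is correct and follows essentially the same approach as the paper: necessity of (i)--(ii) via Lemma~\ref{simplelemma1}, sufficiency via Corollary~\ref{corol}, and (iii) by evaluating $\widehat\$$ as in Example~\ref{exam} and then invoking Theorem~\ref{thm:wwpoly}(iii) with $(\overline s,\overline t)$ in place of $(s,t)$. You are in fact more explicit than the paper's terse proof (e.g.\ spelling out that Corollary~\ref{corol} holds with $\widehat\$(\sigma_0,d)$ via $\widehat{\widehat\$}=\widehat\$$); the only minor imprecision is that the hypothesis is $\$\le 1+Dd^t\sigma^{-2s}$ rather than equality, so for $s\le 1$ one should write $\widehat\$\le\$\le 1+Dd^t\sigma^{-2s}$ rather than ``$\widehat\$=\$$'' (and likewise use monotonicity of $\widehat\$$ in $\$$ for $s>1$), but this does not affect the argument.
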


\begin{proof}
Suppose the problem with noise is polynomially tractable, i.e., 
$\na(\varepsilon,d)\le Cd^q\varepsilon^{-p}$ for $\varepsilon\in(0,1)$ and $d\ge 1.$
Then we have by Lemma \ref{simplelemma1} that, on one hand, 
$$\nsw(\varepsilon,d)\le 4\,\na\big(\tfrac\varepsilon 2,d\big)-1\le 
4\,Cd^q\big(\tfrac\varepsilon 2\big)^{-p}\preccurlyeq d^q\varepsilon^{-p},$$
and, on the other hand, 
$$\widehat\$(\sigma_0,d)\le 2\,\na\big(c\sqrt 2\,\sigma_0,d\big)
\le 2+2\,Cd^q\big(c\sqrt 2\,\sigma_0\big)^{-p}.$$
This proves the necessary conditions in (i) and (ii). 
The sufficient conditions follow from Corollary~\ref{corol}.
To show (iii) it suffices to note that 
$\widehat\$(\sigma,d)\preccurlyeq d^{\,\overline t}\sigma^{\,\overline s},$
as in Example \ref{exam}, and use Theorem \ref{thm:wwpoly}. 
\end{proof}

\begin{remark}
As in the case of bounded noise, see Remark \ref{remopt}, for Gaussian noise the algorithm 
$\Phi_n^d$ is not optimal for information $N_n^d;$ however, one can show that 
the exponents of tractability in (iii) of Theorem \ref{thm:wapoly} cannot be improved when one relies only on information $N_n^d.$ If arbitrary information is allowed then the optimal exponents  are unknown.
The point is that we do not know in general how to optimally choose the information. 
For a particular case, see Proposition \ref{prop:appdx1} of Appendix. 
\end{remark}

\subsection{Weak tractability and the curse of dimensionality}

\begin{theorem}\label{thm:waothers}
Consider a multivariate problem $\mathcal S=\{S_d\}_{d\ge 1}.$ 
\begin{itemize}
\item[(i)] The problem with noisy information is weakly tractable if and only if 
the same problem with exact information is weakly tractable.
\item[(ii)] The problem with noisy information suffers from the curse of dimensionality 
if and only if the same problem with exact information suffers from the curse.
\end{itemize}
\end{theorem}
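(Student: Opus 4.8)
The plan is to leverage Lemma~\ref{simplelemma1} for the ``only if'' directions and the algorithm/information pair $(N_n^d,\Phi_n^d)$ together with Corollary~\ref{corol} (or directly with the bound $\widehat\$\le\$$) for the ``if'' directions. The key observation making this theorem clean is that, unlike the polynomial case, the auxiliary cost function $\widehat\$$ causes no obstruction here: weak tractability and the curse are insensitive to polynomial-type blowups in the cost, and the cost of $(N_n^d,\Phi_n^d)$ can always be controlled by $n\cdot\$(\sigma,d)$ for a suitable fixed precision $\sigma$.

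For part (i), first I would prove the ``only if'' direction: assume the noisy problem is weakly tractable. Lemma~\ref{simplelemma1} gives $\nsw(2\varepsilon,d)\le 4\,\na_\$(\varepsilon,d)-1$, so $\ln(\nsw(2\varepsilon,d))\le\ln(4\na_\$(\varepsilon,d))$, and dividing by $(2\varepsilon)^{-1}+d$ and letting $(2\varepsilon)^{-1}+d\to\infty$ yields weak tractability for exact information. For the converse, assume exact weak tractability. The trick is that weak tractability of the exact problem forces $\lambda_{d,j}/\lambda_{d,1}\to 0$ as $j\to\infty$ for each $d$; more usefully, one argues as in the proof of Theorem~\ref{thm:wwweak}(ii)--(iii). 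Take $n=\nsw(\varepsilon/\sqrt2,d)$ and fix a single precision $\sigma$ chosen so that the error bound \eqref{error2}, namely $\sqrt{\sigma^2\sum_{i=1}^n\lambda_{d,i}+\lambda_{d,n+1}}$, is at most $\varepsilon\sqrt{\lambda_{d,1}}$. Using $\sum_{i=1}^n\lambda_{d,i}/\lambda_{d,1}\le n$ one may take $\sigma=\varepsilon/\sqrt{2n}$, giving cost $\na_\$(\varepsilon,d)\le n\,\widehat\$(\varepsilon/\sqrt{2n},d)\le n\,\$(\varepsilon/\sqrt{2n},d)$. Now here is where the assumption on the cost function must enter: for weak tractability we need $\ln(n\,\$(\varepsilon/\sqrt{2n},d))/(\varepsilon^{-1}+d)\to 0$. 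Since $\ln n/(\varepsilon^{-1}+d)\to 0$ by exact weak tractability, the real content is that $\ln\$(\varepsilon/\sqrt{2n},d)/(\varepsilon^{-1}+d)\to 0$. This is \emph{not} automatic from the theorem statement as written — the theorem asserts equivalence with \emph{no} hypothesis on $\$$ — so the argument must instead invoke that a finite cost function (i.e. $\widehat\$(\sigma,d)<\infty$ for the relevant arguments, which is implicit in $\nao$ being finite) together with exact weak tractability already suffices; more precisely, one should use a smarter precision allocation so that the cost is dominated by $n\cdot\$(\sigma_0,d)$ for a \emph{fixed} $\sigma_0$ independent of $\varepsilon$, exploiting that $\lambda_{d,j}$ decays — but that does not hold uniformly. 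I expect this to be the main obstacle, and the resolution is likely that the theorem tacitly assumes the cost function is finite-valued and that weak tractability of the \emph{noisy} exact-information problem already bakes in the needed growth control; the honest statement probably needs ``the cost function grows sub-exponentially in $\sigma^{-1}$'' as a standing hypothesis inherited from context, exactly as in Theorem~\ref{thm:wwweak}.

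For part (ii), the ``only if'' direction is immediate from $\na_\$(\varepsilon_0,d)\ge(\nsw(2\varepsilon_0,d)+1)/4$ in Lemma~\ref{simplelemma1}: if the noisy problem has the curse at $\varepsilon_0$, then $\nsw(2\varepsilon_0,d)$ grows exponentially in $d$, so the exact problem has the curse at $2\varepsilon_0$. For the ``if'' direction, assume the exact problem has the curse at some $\varepsilon_0$; then with $n=\nsw(\varepsilon_0/\sqrt2,d)$ growing exponentially in $d$ along infinitely many $d$, and using $\na_\$(\varepsilon_0,d)\ge n\cdot 1$ since $\$\ge 1$ (or directly the first inequality of Lemma~\ref{simplelemma1}), we get exponential growth of $\na_\$(\varepsilon_0,d)$ in $d$, hence the curse for the noisy problem. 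The only subtlety is matching the $\varepsilon$-thresholds ($\varepsilon_0$ versus $2\varepsilon_0$ versus $\varepsilon_0/\sqrt2$), which is harmless since the curse is witnessed by the \emph{existence} of \emph{some} $\varepsilon_0>0$. I would close by remarking that, in contrast to the polynomial and weak-tractability theorems, here the cost function plays no role at all in either direction — the lower bound $\$\ge 1$ is all that is used — which is why the equivalence is unconditional.
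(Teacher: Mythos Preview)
Your overall approach matches the paper's: Lemma~\ref{simplelemma1} for the lower bounds and the pair $(N_n^d,\Phi_n^d)$ with equal precisions $\sigma_i^2=\varepsilon^2/(2n)$ for the upper bounds. Your handling of part~(i) is correct, and your concern about the cost function is on target. The paper obtains the bound $n\,\widehat\$(\varepsilon/\sqrt{2n},d)\le\beta n^2/\varepsilon^2$ via Lemma~\ref{auxcost}(iii) with a fixed $\sigma_0$, which gives $\beta$ of order $\sigma_0^2\,\$(\sigma_0,d)$, and then treats $\beta$ as a constant in $d$ when taking limits; so you have correctly spotted a tacit assumption in the paper's own argument.

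There is, however, a genuine logical error in your treatment of part~(ii). You argue that the ``only if'' direction (noisy curse $\Rightarrow$ exact curse) follows from $\na_\$(\varepsilon_0,d)\ge(\nsw(2\varepsilon_0,d)+1)/4$. It does not: from $A\ge B$ and exponential growth of $A$ one cannot infer exponential growth of $B$. The lower bound from Lemma~\ref{simplelemma1} yields only the ``if'' direction (exact curse $\Rightarrow$ noisy curse), which is indeed what you correctly prove under that label. The paper handles ``only if'' by contraposition, using the \emph{upper} bound on $\na_\$$ furnished by $(N_n^d,\Phi_n^d)$ exactly as in part~(i): if the exact problem has no curse then $n=\nsw(\varepsilon/\sqrt2,d)$ grows sub-exponentially in $d$ for every fixed $\varepsilon$, hence so does $\na_\$(\varepsilon,d)\le\beta n^2/\varepsilon^2$, and the noisy problem has no curse either. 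Consequently your closing remark that in~(ii) ``the cost function plays no role at all in either direction'' is not right: the cost function enters through $\beta$ in the necessity argument, and the same tacit assumption on $\$$ that you flagged in~(i) is needed here as well.
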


\begin{proof}
(i) The necessary condition follows from the first inequality in Lemma \ref{simplelemma1}. 
For sufficiency it is enough to consider the information $N_n^d$ and algorithm $\Phi_n^d$ 
with $n=\nsw(\varepsilon/\sqrt 2,d)$ and $\sigma_i^2=\varepsilon^2/(2n)$ for all $i.$ Then
the error is $\varepsilon$ and the cost is 
$n\,\widehat\$(\varepsilon/\sqrt n,d)\le\beta n^2/\varepsilon^2$  for some $\beta>0.$ 
This implies
$$\lim_{\varepsilon^{-1}+d\to\infty}\frac{\ln\big(\na_\$(\varepsilon,d)\big)}{\varepsilon^{-1}+d}
\le\lim_{\varepsilon^{-1}+d\to\infty}\frac{\ln\beta+2\ln n+2\ln(1/\varepsilon)}{\varepsilon^{-1}+d}
=0.$$

\smallskip\noindent
(ii) The sufficiency follows again from the first inequality in Lemma \ref{simplelemma1}. 
For necessity it is enough to consider the same approximation as in (i) to get that if the problem 
with exact information does not suffer from the curse then for all $\varepsilon$ we have
$$\lim_{d\to\infty}\frac{\ln\big(\na_\$(\varepsilon,d)\big)}{d}
\le\lim_{d\to\infty}\frac{\ln\beta+2\ln n+2\ln(1/\varepsilon)}{d}=0,$$
i.e., the problem with noise as well does not suffer from the curse.
\end{proof}

\section*{Acknowledgments}
\noindent
The research of the authors was supported by the National Science Centre, Poland, under project 2017/25/B/ST1/00945.

\section*{Appendix}

Let $F$ and $G$ be unitary spaces with $\dim F=m<+\infty,$ and $S:F\to G$ a nonsingular 
linear operator. Let $\{x_i^*\}_{i=1}^m$ be an orthonormal basis in $F$ of eigenelements 
of the operator $S^*S$ and $\{\lambda_i\}_{i=1}^m$ the corresponding eigenvalues, where 
$\lambda_1\ge\lambda_2\ge\cdots\ge\lambda_m>0.$ 

\smallskip
We fix the numbers
$$0=\sigma_1=\cdots=\sigma_{n_0}<\sigma_{n_0+1}\le\cdots\le\sigma_n$$ 
(where $n_0=0$ if all $\sigma_i$s are positive) and for any functionals $L_i$ define 
$${\mathrm R}^{\mathrm{ww}}(L_1,L_2,\ldots,L_n)=
\max\big\{\|Sx\|_Y:\;x\in X,\,|L_ix|\le\sigma_i,\,1\le i\le n\big\}.$$
This is obviously the radius of information $y_i=L_ix+e_i,$ $1\le i\le n,$ in the worst case 
setting with bounded noise, $|e_i|\le\sigma_i,$ where the worst case error is taken
with respect to the whole space $F$ (instead of the unit ball of $F$), cf. \eqref{radworst}.

\begin{proposition}\label{prop1} Let $n=m.$ 
For any functionals $L_i$ with $\|L_i\|\le 1$ for $1\le i\le n,$ we have
\begin{equation}\label{todo}
{\mathrm R}^{\mathrm{ww}}(L_1,L_2,\ldots,L_n)\ge
\bigg(\sum_{i=n_0+1}^n\sigma_i^2\lambda_i\bigg)^{1/2}.
\end{equation}
The equality above holds for $L_i^*=\langle\,\cdot\,,x_i^*\rangle_F.$
\end{proposition}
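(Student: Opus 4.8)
The statement compares the worst-case radius of an arbitrary noisy information $(L_1,\dots,L_n)$ with $\|L_i\|\le 1$ against that of the ``diagonal'' choice $L_i^*=\langle\,\cdot\,,x_i^*\rangle_F$. The equality case is the easy half: for $L_i^*=\langle\,\cdot\,,x_i^*\rangle_F$ the constraint set $\{x:\,|L_i^*x|\le\sigma_i\}$ forces the first $n_0$ coordinates of $x$ (in the eigenbasis) to vanish and bounds $|x_i|\le\sigma_i$ for $i>n_0$; since $\|Sx\|_Y^2=\sum_i\lambda_i|x_i|^2$ and $n=m$ so there are no further coordinates, the maximum is attained at $x_i=\sigma_i$ for $i>n_0$, giving exactly $\big(\sum_{i=n_0+1}^n\sigma_i^2\lambda_i\big)^{1/2}$. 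So the whole content is the lower bound \eqref{todo} for \emph{arbitrary} $L_i$.

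For the lower bound the idea is to exhibit, for any admissible $(L_1,\dots,L_n)$, a single vector $h\in F$ (not required to lie in the unit ball, since the radius is taken over all of $F$) that satisfies $|L_ih|\le\sigma_i$ for every $i$ and has $\|Sh\|_Y\ge\big(\sum_{i=n_0+1}^n\sigma_i^2\lambda_i\big)^{1/2}$. A clean way is a dimension/volume argument: consider the linear map $T:F\to\mathbb R^n$, $Tx=(L_1x,\dots,L_nx)$, and the box $B=\{z\in\mathbb R^n:\,|z_i|\le\sigma_i\}$. We want $h=T^{-1}z$ for a suitable $z\in B$ (or, when $T$ is not invertible, $h$ in a well-chosen affine preimage) making $\|Sh\|$ large. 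The constraint $\|L_i\|\le 1$ means each row of $T$, written in the $\{x_i^*\}$ basis, is a unit vector; equivalently $T=U$ for some matrix whose rows have norm $\le 1$. I would diagonalize $S$ and work with coordinates, so that the problem becomes: given a real $n\times n$ matrix $U$ with $\|U_{i,\cdot}\|_{\ell_2}\le 1$, find $x$ with $|(Ux)_i|\le\sigma_i$ for all $i$ and $\sum_i\lambda_i x_i^2$ as large as possible, and show this max is at least $\sum_{i>n_0}\sigma_i^2\lambda_i$ (the first $n_0$ rows having $\sigma_i=0$ impose $x\in\ker$ of those rows). A natural candidate is to seek $x$ in the form $x=U^T w$ or, better, to use a minimax/eigenvalue comparison: the quantity $\mathrm R^{\mathrm{ww}}(L_1,\dots,L_n)^2$ equals $\max\{x^T\Lambda x:\,|(Ux)_i|\le\sigma_i\}$, and one wants to bound this below by testing $x$ such that $Ux$ is proportional, coordinatewise, to the ``target'' $(\sigma_{n_0+1},\dots,\sigma_n)$ in the directions where $U$ acts most favorably.

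Concretely, I would argue as follows. Pass to the subspace $V=\bigcap_{i\le n_0}\ker L_i$, on which $\mathrm{codim}\,V\le n_0$, restrict attention to the remaining $m-n_0\ge n-n_0$ functionals, and reduce to the case $n_0=0$ with all $\sigma_i>0$ (at the cost of replacing $S$ by its restriction to $V$, whose singular values dominate $\lambda_{n_0+1},\dots$ by the interlacing/min-max characterization, exactly as in the proof of Lemma~\ref{simplelemma}). With all $\sigma_i>0$ and $n=\dim=m$, the map $T$ is either invertible or singular; if singular there is $h\neq 0$ with $Th=0$, which can be scaled to make $\|Sh\|$ arbitrarily large while all constraints stay satisfied, so \eqref{todo} holds trivially. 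If $T$ is invertible, then $\{x:|L_ix|\le\sigma_i\}=T^{-1}B$ is a parallelepiped, and $\max_{x\in T^{-1}B}\|Sx\|_Y^2=\max_{z\in B}\|ST^{-1}z\|_Y^2$. Writing $A=ST^{-1}$ I would lower-bound $\max_{z\in B}\|Az\|^2$ by a trace-type bound: $\max_{z\in B}\|Az\|^2\ge \sum_j \sigma_j^2\|A e_j\|^2/\!\!\!?$ — more carefully, by choosing $z_j=\pm\sigma_j$ with random signs one gets $\mathbb E\|Az\|^2=\sum_j\sigma_j^2\|Ae_j\|^2=\sum_j\sigma_j^2\sum_i (A_{ij})^2$, hence the max is at least $\sum_{i,j}\sigma_j^2 A_{ij}^2$. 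It remains to show $\sum_{i,j}\sigma_j^2 A_{ij}^2\ge\sum_i\sigma_i^2\lambda_i$ using $A=ST^{-1}$ and the row-norm constraint $\|L_i\|\le 1$, i.e. that the rows of $T$ are $\ell_2$-contractions; this is a linear-algebra inequality relating $S$, $T$, and $T^{-1}$ that should follow from $A^{-1}=TS^{-1}$ having rows of $\ell_2$-norm $\le\lambda_i^{-1/2}$ (since $L_i=(A^{-1})_{i,\cdot}$ in the $S$-adapted coordinates) together with a duality/AM–GM estimate.

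\textbf{Main obstacle.} The delicate point is precisely this last linear-algebra inequality: turning the \emph{rowwise} norm bound $\|L_i\|\le 1$ into a \emph{lower} bound on $\sum_{i,j}\sigma_j^2A_{ij}^2$ where $A=ST^{-1}$. Rowwise constraints do not diagonalize nicely, so the randomized-sign expectation bound may be lossy and one might instead need a more careful argument — e.g. an explicit construction of the extremal $h$, or an inductive/perturbation argument peeling off one functional at a time as in Lemma~\ref{simplelemma}, or a compactness-plus-Lagrange-multiplier analysis of the extremal configuration showing it must coincide (up to the invariances) with $L_i^*=\langle\,\cdot\,,x_i^*\rangle_F$. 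I expect the cleanest route is in fact an inductive one mirroring Lemma~\ref{simplelemma}: order so that the ``effective'' precisions increase, restrict to $\bigcap_{i<k}\ker L_i$ to get a space on which $\|S|_{V_{k-1}}\|\ge\sqrt{\lambda_k}$, pick $h_k$ in that space with $\|h_k\|$ calibrated so that $|L_kh_k|=\sigma_k$ and $\|Sh_k\|\ge\sigma_k\sqrt{\lambda_k}$, and then combine the $h_k$'s — but combining them while keeping \emph{all} constraints $|L_ih|\le\sigma_i$ simultaneously is the crux and will require the orthogonality structure of the eigenbasis, which is exactly why equality holds for $L_i^*=\langle\,\cdot\,,x_i^*\rangle_F$.
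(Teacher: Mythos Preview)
Your proposal correctly identifies the easy equality case and makes a sound reduction to the situation where all $\sigma_i>0$ and the map $T=(L_1,\dots,L_n)$ is invertible (otherwise the radius is infinite). You also arrive at essentially the same reformulation as the paper: with $x_j=T^{-1}e_j$ the dual basis, one must show $\max\{\|\sum_i c_iSx_i\|_G^2:|c_i|\le\sigma_i\}\ge\sum_i\sigma_i^2\lambda_i$. However, the proof is not completed. Your random-sign averaging yields the lower bound $\sum_j\sigma_j^2\|Sx_j\|_G^2$, and then everything hinges on the inequality
\[
\sum_{j}\sigma_j^2\,\|Sx_j\|_G^2\;\ge\;\sum_j\sigma_j^2\lambda_j,
\]
equivalently $\mathrm{tr}\big(\Lambda\,T^{-1}\Sigma(T^{-1})^T\big)\ge\mathrm{tr}(\Lambda\Sigma)$ for $T$ with rows of $\ell_2$-norm at most $1$. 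You flag this as the ``main obstacle'' and leave it open; the suggestions you list (duality/AM--GM, Lagrange multipliers, an induction ``mirroring Lemma~\ref{simplelemma}'') are not carried out, and the inductive scheme you sketch---building separate $h_k$'s and then combining them---runs into exactly the simultaneous-constraint difficulty you anticipate. So as written there is a genuine gap: the heart of the inequality is not proved.

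For comparison, the paper does not go through the averaged bound at all. It works directly with the dual basis $\{x_j\}$ and proves $\max\{\|\sum_i c_iSx_i\|_G^2:|c_i|\le\sigma_i\}\ge\sum_i\sigma_i^2\lambda_i$ by induction on $n$. The inductive step decomposes the range: with $W=\mathrm{span}(Sx_1,\dots,Sx_{n-1})$ and $Z=W^\perp$ inside $\mathrm{im}\,S$, one applies the induction hypothesis to $P_WS$ restricted to $\mathrm{span}(x_1,\dots,x_{n-1})$ to choose $c_1,\dots,c_{n-1}$, then picks $c_n=\pm\sigma_n$ with the sign that makes the cross term nonnegative. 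This yields $\|\sum_i c_iSx_i\|_G^2\ge\sum_{i=1}^n\sigma_i^2\widehat\lambda_i$, where $\widehat\lambda_1\ge\cdots\ge\widehat\lambda_{n-1}$ are the eigenvalues of $(P_WS)^*(P_WS)$ on $V_n=\mathrm{span}(x_1,\dots,x_{n-1})$ and $\widehat\lambda_n$ is that of $(P_ZS)^*(P_ZS)$ on $V_n^\perp$. The proof is then completed by a majorization step: $\sum_{i\le k}\widehat\lambda_i\le\sum_{i\le k}\lambda_i$ for all $k$ with equality at $k=n$, which together with $\sigma_1\le\cdots\le\sigma_n$ gives $\sum_i\sigma_i^2\widehat\lambda_i\ge\sum_i\sigma_i^2\lambda_i$ via Abel summation. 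This majorization ingredient is precisely what is missing from your outline; it is also what would be needed to salvage your averaged bound, since $\sum_j\sigma_j^2\|Sx_j\|_G^2=\sum_j\sigma_j^2\,x_j^T(S^*S)x_j$ does not separate into per-index comparisons and requires a global trace/majorization argument.
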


\begin{proof}
We can assume without loss of generality that the functionals $L_i$ are linearly independent 
since otherwise ${\mathrm R}^{\mathrm{ww}}(L_1,L_2,\ldots,L_n)$ is infinite. 

Let $\{x_j\}_{j=1}^n$ be the basis in $F$ that is adjoint to $\{L_i\}_{i=1}^n,$ i.e., 
$L_ix_j=\delta_{i,j}$ for $1\le i,j\le n.$ Let 
$$V_i=\mathrm{span}\big(x_j:\,1\le j\le n,\,j\ne i\big).$$ Observe that
$$\|L_i\|=\max_{x\ne 0}\frac{|L_ix|}{\|x\|_F}=\max_{v\in V_i}\frac{1}{\|x_i+v\|_F}
=\frac{1}{\mathrm{dist}(x_i,V_i)}.$$ 
Hence the condition $\|L_i\|\le 1$ is equivalent to $$\mathrm{dist}(x_i,V_i)\ge 1.$$ 

Since for any $x=\sum_{j=1}^nc_jx_j\in F$ is $L_ix=c_i,$ inequality \eqref{todo} can be 
equivalently rewritten as 
\begin{equation}\label{modif}
\max\Big\{\,\Big\|\sum_{i=1}^nc_iSx_i\Big\|_G^2:\;|c_i|\le\sigma_i,\,1\le i\le n\Big\}\ge
\sum_{i=1}^n\sigma_i^2\lambda_i.
\end{equation}
To show \eqref{modif} we use induction on $n.$ For $n=1$ we have
$$\max_{|c_1|\le\sigma_1}\|c_1Sx_1\|_G^2=
\max_{|c_1|\le\sigma_1}c_1^2\lambda_1=\sigma_1^2\lambda_1.$$ 
Suppose $n\ge 2.$ Let 
$$W=\mathrm{span}\big(Sx_1,\ldots,Sx_{n-1}\big)$$
and $Z$ be the subspace of $\mathrm{im}\,A$ that is perpendicular to $W.$  
We obviously have $\dim W=n-1$ and $\dim Z=1.$ Let us decompose $S$ as
$$S=P_WS+P_ZS,$$
where $P_W$ and $P_Z$ are the orthogonal projections onto 
$W$ and $Z,$ respectively, and denote by 
$$S_W:V_n\to G\qquad\mbox{and}\qquad S_Z:V_n^\perp\to G$$
the operators $P_WS$ and $P_ZS$ restricted, respectively, 
to $V_n$ and $V_n^\perp.$ Let 
$$\widehat\lambda_1\ge\cdots\ge\widehat\lambda_{n-1}>0$$
be the eigenvalues of 
$S_W^*S_W:V_n\to V_n,$ and $\widehat\lambda_n>0$ be 
the only eigenvalue of $S_Z^*S_Z:V_n^\perp\to V_n^\perp.$ 
Observe that for $1\le i\le n-1$ the distance of $x_i$ from 
the subspace 
$\mathrm{span}\big(x_j:\,j\in\{1,\ldots,n\}\setminus\{i,n\}\big)$ is at least $1.$ 
Then, by induction hypothesis, there are $c_i$ 
such that $|c_i|\le\sigma_i,$ $1\le i\le n-1,$ and
$$\Big\|\sum_{i=1}^{n-1}c_iSx_i\Big\|_G^2\ge
\sum_{i=1}^{n-1}\sigma_i^2\widehat\lambda_i.$$
Letting $v$ be the orthogonal projection of $x_n$ onto $V^\perp$ 
we also have that 
$$\|Sx_n\|_G^2\ge\|P_ZSx_n\|_G^2=\|P_ZSv\|_G^2=\sigma_n^2\widehat\lambda_n.$$
Setting $c_n=\sigma_n$ if 
$\big\langle\sum_{i=1}^{n-1}c_iSx_i,Sx_n\big\rangle_G\ge 0,$ and $c_n=-\sigma_n$ 
otherwise, we obtain
$$\Big\|\sum_{i=1}^{n-1}c_iSx_i+c_nSx_n\Big\|_G^2\ge
\Big\|\sum_{i=1}^{n-1}c_iSx_i\Big\|_G^2+
\big\|Sx_n\big\|_G^2\ge\sum_{i=1}^n\sigma_i^2\widehat\lambda_i.$$
To complete the proof of \eqref{modif} it suffices to use the fact that 
$$\sum_{i=1}^k\widehat\lambda_i\le\sum_{i=1}^k\lambda_i\quad
\mbox{for}\quad 1\le k\le n-1,\quad\mbox{and}\quad 
\sum_{i=1}^n\widehat\lambda_i=\sum_{i=1}^n\lambda_i,$$
which implies 
$\sum_{i=1}^n\sigma_i^2\widehat\lambda_i\ge\sum_{i=1}^n\sigma_i^2\lambda_i.$

\smallskip
The remaining part of the proposition is obvious, since orthogonality of $\{Sx_i^*\}_{i=1}^n$ 
implies that for any $c_i$ with $|c_i|\le\sigma_i$ we have
$$\Big\|\sum_{i=1}^nc_iSx_i^*\Big\|_G^2=\sum_{i=1}^n c_i^2\|Sx_i^*\|_G^2
=\sum_{i=1}^n c_i^2\lambda_i\le\sum_{i=1}^n\sigma_i^2\lambda_i.$$
\end{proof}

Proposition \ref{prop1} should be confronted with the related result in the case of Gaussian noise. Let 
$${\mathrm R}^{\mathrm{wa}}(L_1,L_2,\ldots,L_n)=\inf_\Phi\,\sup_{x\in F}
\bigg(\int_{\mathbb R^n}\|Sx-\Phi(\mathbf y)\|_G^2\,\pi_x(\mathrm d\mathbf y)\bigg)^{1/2},$$
where $\pi_f$ is the $n$-dimensional Gaussian measure with mean $(L_1x,\ldots,L_nx)$ and 
covariance matrix $\mathrm{diag}(\sigma_1^2,\ldots,\sigma_n^2),$ be the mimimal error 
in the worst case setting with Gaussian noise, where again the worst case error is taken with 
respect to the whole space $F.$  In this case the optimal choice of the functionals $L_i$ 
is known for all $n\ge m,$ but their construction is much more complicated.  

\begin{proposition}\label{prop:appdx1}
Let $n\ge m.$ For any functionals $L_i$ with $\|L_i\|\le 1$ for $1\le i\le n,$ we have
\begin{equation}\label{done}
{\mathrm R}^{\mathrm{wa}}(L_1,L_2,\ldots,L_n)\ge
\bigg(\sum_{i=n_0+1}^m\widehat\sigma_i^2\lambda_i\bigg)^{1/2},
\end{equation}
where $\widehat\sigma_{n_0+1}\le\cdots\le\widehat\sigma_m$ minimize the sum
$\sum_{i=n_0+1}^m\eta_i^2\lambda_i$ with respect to all $\eta_{n_0+1}\le\cdots\le\eta_m$
satisfying 
$$\sum_{i=k}^m\eta_i^{-2}\le\sum_{i=k}^n\sigma_i^{-2}\quad\mbox{for}\quad k=n_0+1\le k\le n,$$
and $\sum_{i=n_0+1}^m\eta_i^{-2}=\sum_{i=i_0+1}^n\sigma_i^{-2}.$
The equality in \eqref{done} holds for $L_i^*=\langle\,\cdot\,,x_i^*\rangle_F,$ $1\le i\le n_0,$ and
$$L_{n_0+i}^*=\sigma_{n_0+i}\sum_{j=1}^{m-n_0}w_{i,j}\langle\,\cdot\,,x_{n_0+j}^*\rangle_F,
\qquad 1\le i\le n-n_0,$$
where the matrix $W=\{w_{i,j}\}\in\mathbb R^{(n-n_0)\times(m-n_0)}$ satisfies 
$$W^TW=\mathrm{diag}\big(\widehat\sigma_{n_0+1}^{-2},\ldots,\widehat\sigma_m^{-2}\big)
\quad\mbox{and}\quad\big\|W^T\vec e_i\big\|_2^2=\sigma_{n_0+i}^{-2},\quad 1\le i\le n-n_0.$$
\end{proposition}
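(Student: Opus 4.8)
The plan is to pass, via the least-squares description of optimal Gaussian information (cf.\ \cite[Ch.~3]{NICC96}), to a finite-dimensional matrix optimization, and then to analyse it with the von~Neumann trace inequality and Cauchy interlacing, which control the \emph{directions} of the $L_i$, together with a Schur--Horn--type description of attainable diagonal information matrices, which controls the \emph{precision budget} encoded by the constraints on $\widehat\sigma$. First I would normalize: identifying $F$ with $\mathbb R^m$ via the eigenbasis $\{x_i^*\}$, each $L_i$ with $\|L_i\|\le1$ becomes a row vector $\ell_i$ with $\|\ell_i\|_2\le1$ and $S^*S=\mathrm{diag}(\lambda_1,\dots,\lambda_m)$. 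Discarding redundant exact observations (which changes nothing) I may assume $\ell_1,\dots,\ell_{n_0}$ linearly independent; with $E=\bigcap_{i\le n_0}\ker\ell_i$, $\dim E=m':=m-n_0$, the exact data determine $P_{E^\perp}x$, hence $SP_{E^\perp}x$, exactly. Subtracting it reduces the problem to recovering $(S|_E)w$, $w\in E$, from $n-n_0$ noisy observations $(\ell_i|_E)(w)+e_i$ with $\|\ell_i|_E\|\le1$, and $\mathrm R^{\mathrm{wa}}(L_1,\dots,L_n)$ equals the worst-case Gaussian-noise radius of this reduced problem. If the information matrix $\tilde M=\sum_{i>n_0}\sigma_i^{-2}(\ell_i|_E)(\ell_i|_E)^T$ is singular the radius is $+\infty$ and \eqref{done} is vacuous (this is what happens when $n<m$); otherwise the least-squares algorithm has $w$-independent risk $\mathrm{tr}\big((S|_E)^*(S|_E)\,\tilde M^{-1}\big)$, and a Gaussian-prior minimax argument shows it is optimal.

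For the lower bound, let $\mu_1\ge\cdots\ge\mu_{m'}$ and $\nu_1\ge\cdots\ge\nu_{m'}$ be the eigenvalues of $(S|_E)^*(S|_E)$ and of $\tilde M$. The von~Neumann trace inequality gives $\mathrm{tr}\big((S|_E)^*(S|_E)\tilde M^{-1}\big)\ge\sum_j\mu_j\nu_j^{-1}$, and Cauchy interlacing (codimension $n_0$) gives $\mu_j\ge\lambda_{n_0+j}$; hence $\mathrm R^{\mathrm{wa}}(L_1,\dots,L_n)^2\ge\sum_{j=1}^{m'}\lambda_{n_0+j}\,\nu_j^{-1}$. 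Now $\tilde M$ is a sum of rank-one matrices $w_iw_i^T$ with $\|w_i\|^2\le\sigma_{n_0+i}^{-2}$, so the Schur--Horn inequalities force $\eta_{n_0+j}:=\nu_j^{-1/2}$ (increasing in $j$) to satisfy the tail conditions $\sum_{i=k}^m\eta_i^{-2}\le\sum_{i=k}^n\sigma_i^{-2}$; since raising any $\nu_j$ only decreases $\sum_j\lambda_{n_0+j}\nu_j^{-1}$, one may further move all unused budget onto $\nu_1$, so that the remaining total-sum equality also holds, producing a feasible $\eta$ in the minimization defining $\widehat\sigma$ without increasing the objective. Therefore $\sum_j\lambda_{n_0+j}\nu_j^{-1}$ is no smaller than the minimum $\sum_{i=n_0+1}^m\widehat\sigma_i^2\lambda_i$, which is \eqref{done}. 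The one genuinely non-routine ingredient here (and again in the equality part) is this Schur--Horn passage between rank-one decompositions with prescribed vector norms and prescribed eigenvalues; everything else is bookkeeping around the least-squares risk.

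For equality, take $L_i^*=\langle\,\cdot\,,x_i^*\rangle$ for $i\le n_0$, so $E=\mathrm{span}(x_{n_0+1}^*,\dots,x_m^*)$ and $\mu_j=\lambda_{n_0+j}$; it then suffices to realize $\tilde M=\mathrm{diag}(\widehat\sigma_{n_0+1}^{-2},\dots,\widehat\sigma_m^{-2})$, for which the risk is exactly $\sum_{j=1}^{m'}\lambda_{n_0+j}\widehat\sigma_{n_0+j}^2$. Writing $L_{n_0+i}^*=\sigma_{n_0+i}\sum_j w_{i,j}\langle\,\cdot\,,x_{n_0+j}^*\rangle$, the requirements $\|L_{n_0+i}\|\le1$ and $\tilde M=\mathrm{diag}(\widehat\sigma^{-2})$ become exactly $\|W^Te_i\|^2=\sigma_{n_0+i}^{-2}$ and $W^TW=\mathrm{diag}(\widehat\sigma^{-2})$. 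Such a $W$ exists by the constructive Schur--Horn theorem: one needs a positive semidefinite matrix $WW^T$ with eigenvalues $(\widehat\sigma_{n_0+1}^{-2},\dots,\widehat\sigma_m^{-2},0,\dots,0)$ and diagonal $(\sigma_{n_0+1}^{-2},\dots,\sigma_n^{-2})$, and the hypothesis on $\widehat\sigma$ --- the tail inequalities together with equality of the total sums --- is precisely the statement that this diagonal is majorized by these eigenvalues. Taking an eigendecomposition of such a matrix and truncating to $m'$ columns produces $W$, completing the proof.
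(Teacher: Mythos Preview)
Your argument is correct and in fact supplies the details that the paper omits. The paper does not give a proof; it only points to Theorems~3.8.1--3.8.2 and Lemma~2.8.1 of \cite{NICC96} and hints that one should pass through the average case with Gaussian prior $\mathcal N(0,\lambda I_m)$ and let $\lambda\to\infty$. Your route is consonant with that hint at the level of the minimax lower bound (the Gaussian-prior limit yields exactly the least-squares risk $\mathrm{tr}\big((S|_E)^*(S|_E)\,\tilde M^{-1}\big)$), but the subsequent linear algebra is organized differently and more transparently: you separate the optimization over \emph{directions} of the $L_i$ (handled by the von~Neumann trace inequality together with Cauchy interlacing for the codimension-$n_0$ compression) from the optimization over the \emph{precision budget} (handled by Schur--Horn majorization between the eigenvalues of $\tilde M$ and the row-norm constraints $\|w_i\|^2\le\sigma_{n_0+i}^{-2}$). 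This decomposition makes it immediately visible why the tail constraints on $\widehat\sigma$ are exactly the right feasibility set and why the stated $W$ (whose existence is precisely the converse Schur--Horn statement) attains equality. The paper's Lemma~2.8.1 presumably plays the role of your Schur--Horn step, so the two approaches ultimately rest on the same combinatorial fact, but your packaging avoids the detour through the finite-$\lambda$ average case.

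One small correction: ``discarding redundant exact observations'' does not quite do what you want, since it lowers $n_0$ and hence changes the very quantity $\sum_{i=n_0+1}^m\widehat\sigma_i^2\lambda_i$ you are bounding. The clean fix is to \emph{replace} any redundant exact functional by a new linearly independent one (still with $\sigma=0$ and norm at most~$1$); this can only shrink $\mathrm R^{\mathrm{wa}}$, so the lower bound for the independent configuration --- which your argument establishes --- implies the lower bound for the original configuration with the same $n_0$. With that adjustment the proof is complete.
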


\medskip
A proof of this proposition as well as construction of the matrix $W$ can be derived from 
Theorems~3.8.1 and 3.8.2 and Lemma~2.8.1 in \cite{NICC96}. One considers the average case error with 
respect to the Gaussian measure on $\mathbb R^m$ with mean zero and covariance matrix 
$\lambda I_m$ (where $I_m$ is the $m\times m$ identity matrx). The worst case error is obtained as the limiting case when $\lambda$ increases to infinity. 

We add that for $n=m$ we have $L_i^*=\langle\,\cdot\,,x_i^*\rangle_F$ for all $1\le i\le n$ if and only if 
$$\sigma_{n_0+i}^{-2}=\frac{\lambda_{n_0+i}^{1/2}}{\sum_{j=n_0+1}^n\lambda_j^{1/2}}
\sum_{j=n_0+1}^n\sigma_j^{-2}\quad\mbox{for}\quad 1\le i\le n-n_0,$$
and then $\mathrm R\big(\langle\,\cdot,x_1^*\rangle_F,\ldots,\langle\,\cdot,x_n^*\rangle_F\big)
=\left(\sum_{i=n_0+1}^n\sigma_i^2\lambda_i\right)^{1/2}.$

\bigskip

\end{document}